\theoremstyle{definition}
\newtheorem{theorem}{Theorem}
\newtheorem{proposition}[theorem]{Proposition}
\newtheorem{corollary}[theorem]{Corollary}
\newtheorem{lemma}[theorem]{Lemma}
\newtheorem{definition}[theorem]{Definition}
\newtheorem{example}[theorem]{Example}
\newtheorem{remark}[theorem]{Remark}
\newcommand{\mo}[1]{{\color{blue}#1}}
\newcommand{\goaway}[1]{}
\DeclareMathOperator{\tr}{tr}
\renewcommand{\vec}[1]{\mathbf{#1}}
\DeclareMathOperator{\per}{per}
\newcommand{\Sym}{\mathfrak{S}}
\DeclareMathOperator{\sgn}{sgn}
\newcommand{\U}{\mathcal{U}}
\newcommand{\comp}[1]{\overline{#1}}
\newcommand{\Aop}{A^{\text{op}}}
\newcommand{\Dop}{D^{\text{op}}}
\newcommand{\cyc}{\mathrm{cyc}}
\newcommand{\Des}{\mathrm{Des}}
\begin{document}

\title{Revisiting The R\'{e}dei-Berge Symmetric Functions via Matrix Algebra}

\author{John Irving}
\address{Department of Mathematics. St. Mary's University. Halifax, Nova Scotia, Canada. B3H 3C3}
\email{john.irving@smu.ca}

\author{Mohamed Omar}
\address{Department of Mathematics \& Statistics. York University. Toronto, Ontario, Canada. M3J 1P3}
\email{omarmo@yorku.ca}


\begin{abstract}
We revisit the R\'{e}dei-Berge symmetric function $\U_D$ for digraphs $D$, a specialization of Chow's path-cycle symmetric function. Through the lens of matrix algebra, we consolidate and expand on the work of Chow, Grinberg and Stanley, and Lass concerning the resolution of $\U_D$ in the power sum and Schur bases. Along the way we also revisit various results on Hamiltonian paths in digraphs.
\end{abstract}

\keywords{directed hamiltonian paths, symmetric functions, quasisymmetric functions}

\thanks{}
\date{\today}
\maketitle

\section{Introduction}
Let $D$ be a digraph on vertex set $[n]:=\{1,2,\ldots,n\}$ and let $\pi = \pi_1 \cdots \pi_n \in \Sym_n$, where here $\pi_i$ is the image $\pi(i)$ of $i$ under $\pi$.  An index $i \in [n-1]$ is said to be a \emph{$D$-descent} of $\pi$ if  $(\pi_i, \pi_{i+1})$ is a directed edge of $D$.  Let $\Des_D(\pi)$ denote the set of all $D$-descents of $\pi$. The \emph{R\'{e}dei-Berge symmetric function} of $D$ is defined by
\begin{equation}
\label{eq:U_D}
	\U_D := \sum_{\pi \in \Sym_n} F_{\Des_D(\pi)},
\end{equation}
where the expansion is in terms of the fundamental quasisymmetric functions
$$
	F_I := \sum_{\substack{1 \leq i_1 \leq \cdots \leq i_n \\  \text{$i_j < i_{j+1}$ for $j \in I$}}} z_{i_1} z_{i_2} \cdots z_{i_n}, \qquad I \subseteq [n-1].
$$

The series $\U_D$ first appears in the work of Chow~\cite{ChowPathCycle}, where it manifests as a specialization of his \emph{path-cycle symmetric function}; this connection will be further detailed below.

More recently, $\U_D$ was studied (and named) by Grinberg and Stanley~\cite{GrinbergStanley}, who demonstrated its connection to well-known results of R\'edei and Berge concerning the number of Hamiltonian paths in digraphs.  For instance, they rederive the classic theorem of R\'{e}dei \cite{Redei} that every tournament $D$ has an odd number of directed Hamiltonian paths, and extend this to new relations on the number of such paths modulo $4$. 

In this article, we re-prove results of Chow \cite{ChowPathCycle}, and Grinberg and Stanley \cite{GrinbergStanley}, on expansions of $\U_D$ in terms of classic symmetric functions. Along the way we rederive formulas for Hamiltonian paths in digraphs that coalesce results of Goulden and Jackson \cite{goulden1981enumeration}, Grinberg and Stanley \cite{GrinbergStanley}, Wiseman \cite{Wiseman}, and others. This is done by unifying the theory from these articles into an overarching matrix algebra framework. The majority of these results hinge on Theorem~\ref{thm:main}, which explicitly writes $\U_D$ in terms of symmetric functions that depend on the entries of the adjacency matrix of $D$ and its complement. Theorem~\ref{thm:main} is ultimately a re-framing of work due to Lass~\cite{Lass} using the algebra of set functions (\emph{fonctions d'ensembles}). Indeed, our use of the linear coefficient operator $\mathfrak{L}_n$, which is critical to our main theory (see Section~\ref{sec:main}), is equivalent to Lass' formalism.

The article is structured as follows: We begin in Section~\ref{sec:prelim} by viewing walks and paths in digraphs through the lens of matrix algebra. Therein we provide proofs of classic results on Hamiltonian paths and cycles in digraphs via their directed adjacency matrices. In turn, these results support Section~\ref{sec:main} which is dedicated to symmetric function expansions of $\U_D$. We recover the power sum expansions of Grinberg and Stanley \cite{GrinbergStanley}, determine general expansions for $\U_D$ in the Schur function basis, and recover positivity results in both these bases. We then show how to lift this theory to the full Chow path-cycle symmetric function.

\section{The Algebra of Walks and Paths}\label{sec:prelim}

\subsection{Preliminaries}
We begin with algebraic preliminaries. For $f \in \mathbb{Q}[[x_1,\ldots,x_n]]$ and $I=\{i_1,i_2,\ldots,i_k\} \subseteq [n]$, we use the notation $\mathfrak{L}_{I}\,f$ to denote the coefficient of $x_{i_1}x_{i_2}\cdots x_{i_k}$ in $f$. In the particular case that $I=[n]$, we abbreviate $\mathfrak{L}_I$ by $\mathfrak{L}_n$. For $I \subseteq [n]$, we denote the complement $[n] \backslash I$ by $I^c$, with $n$ understood from context.  The group of permutations on $I$ is denoted $\Sym_I$, or simply $\Sym_n$ when $I=[n]$.

Let $A$ be an $n \times n$ matrix whose $(i,j)$-entry is denoted $A_{i,j}$. Recall that the permanent $\per(A)$ and  determinant $\det(A)$ of $A$ are defined by
\[
\per(A) = \sum_{\sigma \in \mathfrak{S}_n} \prod_{i=1}^n A_{i,\sigma(i)}, \qquad \det(A)=\sum_{\sigma \in \mathfrak{S}_n} \sgn(\sigma) \prod_{i=1}^n A_{i,\sigma(i)},
\]
where $\sgn(\sigma)$ is the sign of the permutation $\sigma$.  
We denote by $A[S]$  the principal submatrix of $A$ whose rows and columns are indexed by $S \subseteq [n]$, and  let $X=\text{diag}(x_1,\ldots,x_n)$.
The following well-known relations between the permanent and determinant are used   throughout:
\begin{equation}\label{eqn:MacMahon}
\mathfrak{L}_S \det(I+XA) = \det A[S] \qquad\text{and}\qquad \mathfrak{L}_S \det(I-XA)^{-1} = \per A[S],    
\end{equation}
Note that the latter of these is a special case of MacMahon's Master Theorem (see~\cite[Section 1.2.11]{gouldenjacksonbook} and also Lemma~\ref{lem:coefficientextraction}, below).
 
Now recall that if $A$ is $n \times m$ and $B$ is $m \times n$, then Sylvester's determinant identity states that \begin{equation}\label{eqn:Sylvester}
\det(I_{n}+AB)=\det(I_{m}+BA).
\end{equation}
We will make particular use of the special case $m=1$, which gives  
\begin{equation}\label{eqn:Sylvester2}
\det(I_{n}+\vec{u}\vec{v}^T)=1+\vec{v}^T\vec{u}
\end{equation}
for column vectors $\vec{u},\vec{v}$.  We will also make central use of Jacobi's Identity, which states that
\begin{equation}\label{eqn:Jacobi}
\det \exp(A)=\exp \tr(A)    
\end{equation} 
where $\exp(A)=\sum_{k \geq 0} \frac{A^k}{k!}$. These formulae hold for matrices over the appropriate commutative rings; see, e.g., \cite[Section 1.1.10]{gouldenjacksonbook}.

\subsection{Walks and Paths in Digraphs}\label{sec:walks}

A \emph{digraph} $D$ consists of a finite set of vertices $V$ together with a set $E \subseteq V \times V$ of directed edges $(u,v)$. Note this definition forbids parallel edges. The \emph{complement} of $D$, denoted $\overline{D}$, is the digraph on $V$ that has a directed edge $(u,v)$ precisely when $(u,v)$ is not a directed edge in $D$. The \emph{opposite} of $D$, denoted $\Dop$, is the digraph that has a directed edge $(u,v)$ precisely when $(v,u)$ is a directed edge in $D$. If $V' \subseteq V$, then the subgraph of $D$ \emph{induced by $V'$}, denoted $D[V']$, has vertices $V'$ and edges $E' = E \cap (V' \times V')$.

We will work exclusively with digraphs on vertex set $[n]$, for some $n$, or induced subgraphs thereof.  The reader should assume from hereon that every digraph under consideration is of this type.  In particular, the parameter $n$ will universally denote the number of vertices in an ambient digraph.

The \emph{adjacency matrix} of a digraph $D$ on $[n]$  is the  matrix $A(D) \in \{0,1\}^{n \times n}$ whose $(i,j)$ entry is $1$ if and only if $(v_i,v_j)$ is a directed edge in $D$. For brevity we will write $A,\comp{A}$ and $\Aop$ for $A(D), A(\comp{D})$ and $A(\Dop)$, respectively, whenever $D$ is implicit.  Evidently we have $\comp{A} = \vec{1}\vec{1}^T-A$, where $\vec{1}=[1,1,\ldots,1]^{T}$ is the $n$-dimensional all-ones column vector. Moreover it is immediate that $\Aop=A^T$.

A \emph{walk} of \emph{length $k \geq 0$} in  $D$ is a sequence $w=({i_0},\ldots,{i_k})$ of vertices such that $({i_j},{i_{j+1}})$ is a directed edge in $D$ for all $0 \leq j < k$.   We say $w$ is a \emph{path} in  $D$ if all its vertices ${i_0},{i_1},\ldots,{i_k}$ are distinct.  A \emph{cycle} in $D$ is an equivalence class of walks of the form $({i_0},{i_1},\ldots,{i_k},{i_0})$, where ${i_0},\ldots,{i_k}$ are distinct, and two such walks are equivalent if one is a cyclic shift of the other.  A \emph{cycle cover} of $D$ is a set of vertex-disjoint cycles where each vertex appears in exactly one cycle.

We often will need to refer to cycles in  digraphs that are compatible with cycles in permutations. Let $D$ be a digraph on $V \subseteq [n]$ and let $\pi \in \Sym_V$ be a permutation on $V$. Then a cycle $(i_0 \ i_1 \cdots i_k)$ in the disjoint cycle decomposition of $\sigma$ is said to be a \emph{$D$-cycle} if $(i_0,i_1,\ldots,i_k,i_0)$ is a cycle in $D$. 

A convenient way to keep track of walks in  $D$ is through its \emph{walk generating function}.  Let $\gamma_0(D)=1$ and, for any $k \geq 0$, let $\gamma_{k+1}(D)$ be the polynomial in indeterminates $x_1,x_2,\ldots,x_n$ given by
\[
\gamma_{k+1}(D):=\sum_{\substack{(i_0,i_1, \ldots, i_{k})\\ \text{a walk in }D}} x_{i_0}x_{i_1} \cdots x_{i_{k}}.
\]
That is, $\gamma_{k+1}(D)$ accounts for walks of length $k$, which are those containing $k+1$ vertices (counted with multiplicity).  
The walk generating function of $D$ is then the series $W_D(z)$ in $\mathbb{Q}[x_1,x_2,\ldots,x_n][[z]]$ defined by
\[
W_D(z)=\sum_{k \geq 0} \gamma_k(D) z^{k}.
\]
The role of the indeterminates $x_i$ in $W_D(z)$ is left implicit for brevity.  Evidently,  $W_D(z)$ involves only those  $x_i$ for which $i$ is a vertex of $D$. In particular, if $D$ is a digraph on $[n]$ and $V \subseteq [n]$, then $W_{D[V]}(z)$ is therefore obtained from $W_D(z)$ by setting $x_j=0$ for all $j \notin V$.

If $D$ is a digraph on $[n]$ then we can express $W_D(z)$ in a compact form by using its adjacency matrix $A$ and the diagonal matrix  $X=\text{diag}(x_1,x_2,\ldots,x_n)$. An easy induction reveals that the $(i,j)$ entry of $(XA)^{k} X$ is the generating series for walks of length $k$ in $D$ from vertex $i$ to vertex $j$. Thus $\gamma_{k+1}(D)$ is the sum of all entries of this matrix, i.e. $\gamma_{k+1}(D)=\vec{1}^T(XA)^{k}X\vec{1}$.  Letting $\vec{x}=[x_1,\ldots,x_n]^T$, we consequently have
\begin{equation}\label{eqn:wz}
W_D(z)
= 1+\vec{1}^T\left( z\sum_{k \geq 0} (XA)^{k}z^k\right)\vec{x} = 1+z\vec{1}^T(I-zXA)^{-1}\vec{x}.
\end{equation}
The following reformulation of this identity provides a compact expression for $W_D(z)$ that is fundamental to our developments. An equivalent observation was made in \cite{stanleycountingpaths}.

\begin{lemma}\label{lem:walks}
Let $D$ be a digraph on $[n]$ with adjacency matrix $A$. Then 
 $$W_{{D}}(z)=\dfrac{\det(I+zX\comp{A})}{\det(I-zX{A})}.$$
Consequently we have $W_{\comp{D}}(z)=(W_D(-z))^{-1}$, and when $D$ is acyclic $W_D(z)=\det(I+zX\comp{A})$.
\end{lemma}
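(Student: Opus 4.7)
The plan is to derive the formula directly from the matrix identity \eqref{eqn:wz}. Since $\overline{A} = \vec{1}\vec{1}^T - A$, I can rewrite
\[
I + zX\overline{A} \;=\; (I - zXA) + z\vec{x}\vec{1}^T,
\]
exhibiting the prospective numerator as a rank-one perturbation of $M := I - zXA$. The matrix $M$ reduces to $I$ at $z=0$ and is therefore invertible as a formal power series in $z$, so $\det M$ is a unit in $\mathbb{Q}[x_1,\ldots,x_n][[z]]$.

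The key step is to factor $\det(M + z\vec{x}\vec{1}^T) = \det(M)\,\det(I + zM^{-1}\vec{x}\vec{1}^T)$ and apply \eqref{eqn:Sylvester2} with $\vec{u} = zM^{-1}\vec{x}$ and $\vec{v} = \vec{1}$ to obtain
\[
\det(I + zX\overline{A}) \;=\; \det(I - zXA)\bigl(1 + z\vec{1}^T(I - zXA)^{-1}\vec{x}\bigr).
\]
By \eqref{eqn:wz}, the parenthesized factor is exactly $W_D(z)$, and dividing across gives the claimed expression.

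For the complement identity, applying the main formula to $\overline{D}$ and using $\overline{\overline{A}} = A$ yields $W_{\overline{D}}(z) = \det(I + zXA)/\det(I - zX\overline{A})$, which by a direct sign flip equals $(W_D(-z))^{-1}$. For the acyclic case, \eqref{eqn:MacMahon} applied with $A$ replaced by $-zA$ expands
\[
\det(I - zXA) \;=\; \sum_{S \subseteq [n]} (-z)^{|S|}\Bigl(\prod_{i \in S} x_i\Bigr)\det A[S];
\]
since each nonzero term of $\det A[S]$ corresponds to a disjoint union of cycles in $D[S]$, and $D$ acyclic forces $D[S]$ acyclic, every $S \neq \emptyset$ contributes zero and the denominator collapses to $1$.

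There is no serious obstacle: the argument amounts to recognizing the rank-one structure hidden in $\overline{A} = \vec{1}\vec{1}^T - A$ and invoking the matrix-determinant lemma in the guise of \eqref{eqn:Sylvester2}. The only point worth flagging is that $M^{-1}$ must be interpreted as a formal power series in $z$, so no analytic invertibility condition is needed.
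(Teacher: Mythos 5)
Your proof is correct and follows essentially the same route as the paper: both recognize $I+zX\overline{A}$ as the rank-one perturbation $(I-zXA)+z\vec{x}\vec{1}^T$ and apply Sylvester's identity \eqref{eqn:Sylvester2} with $\vec{u}=z(I-zXA)^{-1}\vec{x}$, $\vec{v}=\vec{1}$ to recover the walk series from \eqref{eqn:wz}. The only (immaterial) difference is the acyclic case, where you expand $\det(I-zXA)$ over principal minors via \eqref{eqn:MacMahon} and note that $\det A[S]=0$ for $S\neq\emptyset$, while the paper relabels the vertices topologically to make $A$ strictly upper triangular; both correctly give $\det(I-zXA)=1$.
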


\begin{proof}
We have
\begin{align*}
\det(I+zX\comp{A})
&= \det(I+zX(\vec{1}\vec{1}^T-{A}))\\
&=\det(I-zX{A}+z\vec{x}\vec{1}^T)\\
&=\det(I-zXA)\det(I+z(I-zXA)^{-1}\vec{x}\vec{1}^T) \\
&=\det(I-zXA)(1+z\vec{1}^T(I-zX{A})^{-1} \vec{x}),
\end{align*}
where the final equality comes from Sylvester's identity (\ref{eqn:Sylvester2}) applied to the pair $\vec{u}=z(I-zXA)^{-1}\vec{x}$ and $\vec{v}=\vec{1}$.
From Equation~(\ref{eqn:wz}) we then get 
\[
W_D(z)=1+z\vec{1}^T(I-zX{A})^{-1} \vec{x}=\frac{\det(I+zX\comp{A})}{\det(I-zXA)}.
\]  
It immediately follows that $W_{\comp{D}}(z)=(W_D(-z))^{-1}$. Finally, if $D$ is acyclic then we can relabel its vertices so as to make $A$ strictly upper triangular (i.e. topologically ordered), which gives $\det(I-zXA)=1$ and thus $W_D(z)=\det(I+zX\comp{A})$. 
\end{proof}
\goaway{
\mo{Now suppose $D$ is a diagraph on $[n]$ with adjacency matrix $A$ and $\emptyset \neq V \subseteq [n]$. The induced subgraph $D[V]$ has adjacency matrix $A[V]$, which is the $|V| \times |V|$ submatrix of $D$ obtained removing the rows and columns indexed by $[n] \backslash V$ in $D$. Let $X[V]$ be the submatrix of $X$ obtained by removing the same rows and columns, so $X[V]$ is the diagonal matrix whose diagonal entries are $\{x_i\}_{i \in V}$. Then as in Lemma~\ref{lem:walks}, ordering the vertices with the same order as in $[n]$, we have
\[
W_{D[V]}(z)=\frac{\det(I[V]+zX[V]\comp{A[V]})}{\det(I[V]-zX[V]A[V])}.
\]
where here, $I[V]$ is the identity matrix index with rows and columns indexed by $V$. However, we can express $W_{D[V]}(z)$ directly from $W_D(z)$ as follows:
\begin{lemma}\label{lem:WDtransition}
Let $D$ be a digraph on $[n]$ and $\emptyset \neq V \subseteq [n]$. Then $W_{D[V]}(z)$ is obtained from $W_D(z)$ by setting $x_j=0$ for $j \notin V$.
\end{lemma}
\begin{proof}
First consider $I+zX\comp{A}$. Partition this matrix into two blocks based on rows and columns index by $V$ and indexed by $V^c$. In this way we get
\begin{align*}
I+zX\comp{A} &= \begin{pmatrix}
I[V] & O_{V \times V^c} \\ O_{V^c \times V} & I[V^c]
\end{pmatrix} + 
z \cdot \begin{pmatrix}
X[V] & O_{V \times V^c} \\ O_{V^c \times V} & X[V^c]
\end{pmatrix}
\begin{pmatrix}
\comp{A[V]} & \comp{A[V,V^c]} \\ \comp{A[V^c,V]} & \comp{A[V^c]}
\end{pmatrix} \\
&= \begin{pmatrix}
I[V]+zX[V]\comp{A[V]} & zX[V]\comp{A[V,V^c]} \\ zX[V^c]\comp{A[V^c,V]} & I[V^c]+zX[V^c]\comp{A[V^c]}
\end{pmatrix}
\end{align*}
Now setting $x_j=0$ for $j \notin V$ annihilates $X_{V^c}$ leaving us with 
\[
(I+zX\comp{A})\bigg|_{x_j=0 \text{ for } j \notin V} = \begin{pmatrix} I[V]+zX[V]\comp{A[V]} & zX[V]\comp{A[V,V^c]} \\ O_{V^c \times V} & I[V^c]
\end{pmatrix}
\]
whose determinant is exactly $\det(I[V]+zX[V]\comp{A[V]})$. Similarly $\det(I-zXA)$ with $x_j=0$ for all $j \notin I$ is $\det(I[V]+zX[V]A[V])$. The result follows.
\end{proof}
}
}
\newcommand{\ham}{\mathrm{ham}}

Recall a path of length $n-1$ in a digraph $D$ on $[n]$ is said to be \emph{Hamiltonian}, meaning it encounters every vertex in $D$.  For convenience we also consider the empty digraph on zero vertices to have a single Hamiltonian path.   The number of Hamiltonian paths in $D$ is denoted  $\mathrm{ham}(D)$.  We now observe that Lemma~\ref{lem:walks} yields a permanent-determinant formula for this statistic. 

\begin{proposition}
\label{prop:hamps}
    The number of Hamiltonian paths in a digraph $D$ on $[n]$ is given by
    $$
    \ham(D)=
    \sum_{S \subseteq [n]}
\det \comp{A}[S] \cdot \per {A}[S^c].
    $$
\end{proposition}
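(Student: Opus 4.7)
The plan is to extract $\ham(D)$ as the coefficient of $z^n x_1 x_2 \cdots x_n$ in $W_D(z)$ and then invoke Lemma~\ref{lem:walks} to perform this extraction using the determinant/permanent identities (\ref{eqn:MacMahon}).

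First I would verify that $\ham(D) = \mathfrak{L}_n\,[z^n]\,W_D(z)$. By definition, $[z^n]\,W_D(z) = \gamma_n(D)$, and $\gamma_n(D)$ is a sum of monomials $x_{i_0} x_{i_1}\cdots x_{i_{n-1}}$ over length-$(n-1)$ walks. Such a walk contributes to $\mathfrak{L}_n$ (i.e.\ to the coefficient of $x_1 x_2\cdots x_n$) if and only if its $n$ vertices are exactly $1,2,\ldots,n$ each appearing once, i.e.\ if and only if it is a Hamiltonian path. Each Hamiltonian path contributes $+1$, so $\mathfrak{L}_n\,\gamma_n(D)=\ham(D)$.

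Next, by Lemma~\ref{lem:walks},
\[
W_D(z) \;=\; \det(I+zX\comp{A}) \cdot \det(I-zX A)^{-1}.
\]
Using the identities (\ref{eqn:MacMahon}) with the substitution $X \mapsto zX$, I would expand
\[
\det(I+zX\comp{A}) \;=\; \sum_{S\subseteq[n]} z^{|S|}\Bigl(\prod_{i\in S} x_i\Bigr)\det\comp{A}[S],
\]
which is already multilinear in the $x_i$, and
\[
\det(I-zX A)^{-1} \;=\; \sum_{T\subseteq[n]} z^{|T|}\Bigl(\prod_{i\in T} x_i\Bigr)\per A[T] \;+\; (\text{terms not multilinear in the }x_i),
\]
where the second summand collects those contributions in which some $x_i$ appears to a power $\geq 2$.

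Finally, I would multiply these expansions and harvest the coefficient of $z^n x_1 x_2\cdots x_n$. Any contribution from a non-multilinear term in the second factor produces some $x_i^{\geq 2}$ after multiplication by the multilinear first factor, hence does not affect $\mathfrak{L}_n$. For the multilinear contributions, a term indexed by $S$ from the first factor multiplied by a term indexed by $T$ from the second factor yields $z^{|S|+|T|}\prod_{i\in S\cup T} x_i$ times $\det\comp{A}[S]\,\per A[T]$, provided $S\cap T=\emptyset$ (otherwise some $x_i^2$ appears, killing the $\mathfrak{L}_n$-coefficient). The requirement $S\cup T = [n]$ and $|S|+|T|=n$ then forces $T=S^c$, yielding
\[
\ham(D) \;=\; \sum_{S\subseteq[n]} \det\comp{A}[S]\cdot \per A[S^c].
\]

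There is no serious obstacle here; the main thing to be careful about is the simultaneous bookkeeping of the $z$-grading and the multilinearity in the $x_i$, and in particular the observation that non-multilinear parts of $\det(I-zXA)^{-1}$ cannot contribute to $\mathfrak{L}_n$ once multiplied by the (automatically multilinear) expansion of $\det(I+zX\comp{A})$.
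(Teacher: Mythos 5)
Your proposal is correct and follows essentially the same route as the paper: both extract $\ham(D)$ as a coefficient of $W_D$ via Lemma~\ref{lem:walks} and then apply the permanent/determinant coefficient identities~\eqref{eqn:MacMahon} to the two factors, pairing a subset $S$ with its complement. The only cosmetic difference is that the paper sets $z=1$ and uses the splitting $\mathfrak{L}_n(fg)=\sum_{S}\mathfrak{L}_S f\cdot\mathfrak{L}_{S^c}g$ directly, whereas you keep the $z$-grading and spell out the multilinearity bookkeeping explicitly.
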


\begin{proof}
From the definition of $W_D(z)$ we have $\ham(D)=\mathfrak{L}_n\,W_{{D}}(1)$. By  Lemma~\ref{lem:walks} this is
\begin{align*}
\mathfrak{L}_n\, \det(I+X\comp{A})\det(I-X{A})^{-1} 
&=
\sum_{S \subseteq [n]}
\mathfrak{L}_{S} \det(I+X\comp{A}) \cdot \mathfrak{L}_{S^c} \det(I-X{A})^{-1},
\end{align*}
and the result follows from Equation~\eqref{eqn:MacMahon}.
\end{proof}

It is clear from  definition that for $\sigma \in \Sym_n$ the product $\prod_{i=1}^n A_{i,\sigma(i)}$ equals $1$ if every cycle in the disjoint cycle decomposition of $\sigma$ is a $D$-cycle, and equals $0$ otherwise. Consequently, $\per(A)$ is the number of cycle covers of $D$. We can therefore use Proposition~\ref{prop:hamps} to recover the following result of Wiseman \cite{Wiseman}.
\begin{corollary}\label{cor:wiseman}\cite[Theorem 2.1]{Wiseman}
Suppose $D$ is an acyclic digraph. Then the number of Hamiltonian paths in $\comp{D}$ is equal to the number of cycle covers of $\comp{D}$.
\end{corollary}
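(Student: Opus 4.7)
The plan is to apply Proposition \ref{prop:hamps} with $D$ replaced by $\overline{D}$, and then exploit the acyclicity of $D$ to collapse the resulting sum down to a single term.

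More concretely, since the complement of $\overline{D}$ is $D$, the proposition gives
$$
\ham(\overline{D}) = \sum_{S \subseteq [n]} \det A[S] \cdot \per \overline{A}[S^c].
$$
Since $D$ is acyclic, I would relabel vertices topologically so that $A$ is strictly upper triangular, exactly as in the final sentence of the proof of Lemma \ref{lem:walks}. Under such a relabelling, every principal submatrix $A[S]$ is again strictly upper triangular, hence has zero diagonal and therefore vanishing determinant whenever $S \neq \emptyset$. Only the $S=\emptyset$ summand survives, and it contributes $\det A[\emptyset] \cdot \per \overline{A}[[n]] = \per \overline{A}$.

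Finally I would invoke the observation made in the paragraph preceding the corollary: $\per \overline{A}$ counts the cycle covers of $\overline{D}$, since $\prod_i \overline{A}_{i,\sigma(i)} = 1$ exactly when every cycle of $\sigma$ is an $\overline{D}$-cycle. Combining these steps yields $\ham(\overline{D}) = \per \overline{A}$, which is the claim.

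There is really no obstacle here: the only subtlety is remembering that the proposition must be applied to $\overline{D}$ (so that the $\det$ factor involves $A$ rather than $\overline{A}$), because it is $A$, not $\overline{A}$, that can be put in strictly upper triangular form under the acyclicity hypothesis on $D$.
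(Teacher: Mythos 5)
Your proposal is correct and follows essentially the same route as the paper: apply Proposition~\ref{prop:hamps} to $\comp{D}$ (whose complement is $D$), observe that acyclicity forces $\det A[S]=0$ for all $S\neq\emptyset$, and conclude $\ham(\comp{D})=\per(\comp{A})$, the number of cycle covers of $\comp{D}$. The paper justifies the vanishing of $\det A[S]$ directly from the absence of $D$-cycles rather than via a topological relabelling, but this is only a cosmetic difference.
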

\begin{proof}
Since $D$ is acyclic, $\det A[S] =0$ for $S \neq \emptyset$, so by Proposition~\ref{prop:hamps}, $\ham(\comp{D})=\per(\comp{A})$. The result follows. 
\end{proof}

The following result of Berge is immediate from Proposition~\ref{prop:hamps} upon observing that $\det(M) \equiv \per(M) \pmod{2}$ for any integer matrix $M$.

\begin{corollary} \cite[Section 10.1]{Berge91}
    For any digraph $D$ we have $\ham(D) \equiv \ham(\comp{D}) \pmod{2}$.
\end{corollary}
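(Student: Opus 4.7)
The plan is to apply Proposition~\ref{prop:hamps} to both $D$ and $\comp{D}$, then reduce modulo $2$ and use the elementary fact that $\det(M) \equiv \per(M) \pmod 2$ for every integer matrix $M$ (the signs in the permanent/determinant expansions agree mod~$2$).

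First I would write out the two formulas side by side. By Proposition~\ref{prop:hamps} applied to $D$ we have
\[
\ham(D) = \sum_{S \subseteq [n]} \det \comp{A}[S] \cdot \per A[S^c],
\]
while applying it to $\comp{D}$, whose adjacency matrix is $\comp{A}$ and whose complement is $D$ with adjacency matrix $A$, yields
\[
\ham(\comp{D}) = \sum_{S \subseteq [n]} \det A[S] \cdot \per \comp{A}[S^c].
\]

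Next I would reduce both sums modulo $2$. Using $\det \equiv \per \pmod 2$ on each determinant factor gives
\[
\ham(D) \equiv \sum_{S \subseteq [n]} \per \comp{A}[S] \cdot \per A[S^c] \pmod 2,
\]
and similarly
\[
\ham(\comp{D}) \equiv \sum_{S \subseteq [n]} \per A[S] \cdot \per \comp{A}[S^c] \pmod 2.
\]
Finally I would substitute $S \mapsto S^c$ in one of the two sums (say the second), which is a bijection on subsets of $[n]$, to see that the two right-hand sides coincide term-by-term. This gives $\ham(D) \equiv \ham(\comp{D}) \pmod 2$.

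There is no real obstacle here, since all the work has been front-loaded into Proposition~\ref{prop:hamps}; the only observation needed is the mod-$2$ coincidence of permanent and determinant, which the authors already flag in the sentence preceding the statement. The proof is essentially two lines once that is noted.
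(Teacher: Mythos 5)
Your proof is correct and follows exactly the route the paper takes: the paper disposes of this corollary in one sentence by citing Proposition~\ref{prop:hamps} together with the observation that $\det(M) \equiv \per(M) \pmod 2$, and your write-up simply fills in the symmetric comparison of the two sums (including the harmless reindexing $S \mapsto S^c$). No gaps.
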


Not all digraphs have Hamiltonian paths,  but certain  classes are known to have at least one. For instance, a classic result of R\'{e}dei \cite{Redei} establishes that any tournament has an odd number of Hamiltonian paths. Recall that a digraph $D$ is a \emph{tournament} if for every pair of distinct vertices $i,j$, exactly one of $(i,j)$ and $(j,i)$ is a directed edge (and $(i,i)$ is not an edge for any $i$). Here we offer a different proof of R\'{e}dei's result using Lemma~\ref{lem:walks}.

\begin{theorem}\label{thm:Redei}\cite{Redei}
If $D$ is a tournament on $[n]$ then $\mathrm{ham}(D)$ is odd.    
\end{theorem}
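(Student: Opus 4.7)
My plan is to combine Proposition~\ref{prop:hamps} (reduced mod $2$) with the tournament identity $\bar A = A^T + I$ and exploit a short sign-reversing involution. Since $\det M \equiv \per M \pmod 2$ for any integer matrix, Proposition~\ref{prop:hamps} gives
\[
\ham(D) \equiv \sum_{S \subseteq [n]} \per \bar A[S] \cdot \per A[S^c] \pmod 2.
\]
In a tournament we have $\bar A = A^T + I$ (equivalently, $A + A^T + I = J$), so $\bar A[S] = A^T[S] + I_{|S|}$. Expanding the permanent by choosing, at each diagonal position of $A^T[S] + I_{|S|}$, whether to select the $1$ from $I$ or the entry from $A^T[S]$ yields
\[
\per \bar A[S] \;=\; \sum_{U \subseteq S} \per A^T[U] \;=\; \sum_{U \subseteq S} \per A[U],
\]
using the general fact $\per M = \per M^T$.

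Substituting this back and re-indexing with $V = S^c$ (so that $U \subseteq S$ becomes $U \cap V = \emptyset$) collapses the double sum to
\[
\ham(D) \;\equiv\; \sum_{\substack{U, V \subseteq [n] \\ U \cap V = \emptyset}} \per A[U] \cdot \per A[V] \pmod 2.
\]
This counts ordered configurations $(U, V, \pi, \rho)$ in which $U$, $V$ are disjoint subsets of $[n]$ and $\pi$, $\rho$ are cycle covers of $D[U]$, $D[V]$ respectively. To conclude I plan to build a fixed-point-free involution on the \emph{nonempty} configurations: given $(U, V, \pi, \rho) \neq (\emptyset, \emptyset, \emptyset, \emptyset)$, let $v = \min(U \cup V)$, locate the cycle $C$ of whichever cover contains $v$, and transfer $C$ to the opposite side (so its vertices migrate from $U$ to $V$ or vice versa, while the cycle itself is removed from one cover and inserted into the other). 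Since $C$ lives entirely in the side that currently contains $v$, this swap preserves disjointness; it is patently self-inverse (the minimum of $U \cup V$ is unchanged by the move); and the unique configuration on which it cannot be performed is the empty one, which contributes $\per A[\emptyset]^2 = 1$. Therefore the sum is odd.

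The main delicate step will be verifying the permanent-plus-identity expansion and confirming that the substitution $V = S^c$ enumerates every disjoint pair $(U, V)$ exactly once; once these bookkeeping items are in place, the involution is immediate and R\'edei's theorem follows. A pleasant feature of this route is that tournament-specificity enters in a single clean algebraic identity ($\bar A = A^T + I$), with the rest of the argument reduced to a canonical cycle-swap involution.
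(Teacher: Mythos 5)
Your proof is correct, but it takes a genuinely different route from the paper's. The paper also begins from $\ham(D)=\mathfrak{L}_n\,W_D(1)$ and the tournament identity $\comp{A}=A^T+I$, but then stays entirely algebraic: working in $\mathbb{F}_2[x_1,\ldots,x_n]/\langle x_1^2,\ldots,x_n^2\rangle$ it collapses $W_D(1)=\det(I+XA+X)\det(I-XA)^{-1}$ to $\det(I+X(I-XA)^{-1})=\det(I+X)$ via Sylvester's identity and the vanishing of $X^2$, so that $\mathfrak{L}_n\,W_D(1)=1$ outright. You instead pass through Proposition~\ref{prop:hamps}, reduce determinants to permanents mod $2$, use the (correct) expansion $\per(A^T[S]+I)=\sum_{U\subseteq S}\per A[U]$ together with $\per M=\per M^T$, re-index to the sum over disjoint pairs $(U,V)$, and finish with an explicit parity involution on pairs of disjoint cycle covers whose unique fixed point is the empty configuration. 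All the steps check out: the re-indexing $V=S^c$ does enumerate each disjoint pair exactly once, the transferred cycle remains a valid $D$-cycle on the other side since its vertex set moves with it, and the minimum of $U\cup V$ is preserved, so the map is a well-defined fixed-point-free involution on nonempty configurations. What your version buys is a concrete combinatorial explanation of the odd parity (a single unpaired object) using only elementary facts about permanents, in the spirit of Corollary~\ref{cor:wiseman} and Berge's congruence; what the paper's version buys is brevity and uniformity with the rest of its matrix-algebra framework, since the whole argument is a four-line determinant manipulation in the quotient ring already set up for Theorem~\ref{thm:main}.
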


\begin{proof}
As before we write $\ham(D) = \mathfrak{L}_n\,W_D(1)$.  Since $D$ is a tournament we have $\comp{A}=A^T+I$ and thus
$$
	\det(I+X\comp{A})=\det(I+X(A^T+I))=\det(I+(A^T+I)X)=\det(I+XA+X),
$$
the middle equality by Sylvester's Identity \eqref{eqn:Sylvester}. We are concerned only with the parity of $\mathfrak{L}_n\,W_D(1)$, so we now work over the quotient ring $\mathbb{F}_2[x_1,\ldots,x_n] /\langle{x_1^2,\cdots,x_n^2\rangle}$ in which $X^2=0$ and $XA =-XA$. From this,  Lemma~\ref{lem:walks} gives
\begin{align*}
W_D(1) = \det(I+XA+X)\det(I-XA)^{-1} 
&= \det(I-XA+X)\det(I-XA)^{-1} \\
&= \det(I+X(I-XA)^{-1}) \\
&= \det(I+X(I + XA + (XA)^2 + \cdots)) \\
&= \det(I+X),
\end{align*}
the last equality since $X^2=0$. This yields $\mathfrak{L}_n\,W_D(1) = 1$, completing the proof.
\end{proof}

Proposition~\ref{prop:hamps} is similar to known expressions due to Goulden and Jackson~\cite{goulden1981enumeration} and Liu~\cite{Liu} for the number of Hamiltonian cycles in a digraph $D$. (Recall that a cycle in $D$ is said to be \emph{Hamiltonian} if it encounters every vertex; the empty digraph having no Hamiltonian cycles, and a digraph on one vertex having a Hamiltonian cycle if and only if it contains a loop.)  Although peripheral to our study, we pause here to show how these results can be derived from similar matrix analyses.  
Part (a) of following proposition appears as \cite[Theorem 4.1]{goulden1981enumeration}, and is proven there using Lagrangian methods. It also appears in \cite{Gnang}.  Part (b) is trivially equivalent to Theorem 2 of~\cite{Liu}, where it is derived via the Matrix-Tree Theorem.

\begin{proposition}
\label{prop:GJ}
Let $D$ be a digraph on $[n]$.  For any $i \in [n]$, the number of Hamiltonian cycles in  $D$ is given by the following equivalent expressions: 
\begin{enumerate}
\item[(a)]
$\displaystyle
\sum_{S \subseteq [n]\setminus\{i\}}
 (-1)^{|S|} \det A[S] \cdot \per A[S^c].
$
\item[(b)]
$\displaystyle
\frac{1}{n}\sum_{S \subseteq [n]}
 (-1)^{|S|} |S^c|\,\det A[S] \cdot \per A[S^c].
$
\end{enumerate}
\end{proposition}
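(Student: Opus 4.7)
The plan is to prove (a) by recognizing the alternating sum as a multilinear coefficient of the $(i,i)$-entry of the resolvent $(I-XA)^{-1}$, and then interpreting that coefficient combinatorially via closed walks based at $i$. First I would apply the cofactor formula $M^{-1}_{i,i} = \det M^{(i,i)}/\det M$ to $M = I - XA$, yielding
$$\bigl((I - XA)^{-1}\bigr)_{i,i} = \det(I_{n-1} - X[\{i\}^c]\,A[\{i\}^c]) \cdot \det(I - XA)^{-1}.$$
The first factor is a multilinear polynomial in $\{x_j : j \neq i\}$, so applying \eqref{eqn:MacMahon} (with $A \to -A$) to each factor and extracting the coefficient of $x_1 x_2 \cdots x_n$ gives
$$\mathfrak{L}_n\bigl[\bigl((I - XA)^{-1}\bigr)_{i,i}\bigr] = \sum_{S \subseteq [n] \setminus \{i\}} (-1)^{|S|} \det A[S] \cdot \per A[S^c].$$

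Next I would interpret the same object combinatorially. From $(I - XA)^{-1} = \sum_{k \geq 0}(XA)^k$, a direct expansion shows
$$\bigl((I - XA)^{-1}\bigr)_{i,i} = \sum_{k \geq 0} \sum_{\substack{\text{closed walks}\\ i = i_0 \to i_1 \to \cdots \to i_k = i}} x_{i_0} x_{i_1} \cdots x_{i_{k-1}},$$
with the $x$-weight omitting the terminal vertex $i_k = i$. The multilinear monomial $x_1 x_2 \cdots x_n$ therefore arises precisely when $i_0, i_1, \ldots, i_{n-1}$ is a permutation of $[n]$, i.e.\ when the closed walk traces a Hamiltonian cycle of $D$ based at $i$. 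Since every Hamiltonian cycle of $D$ contains $i$ and corresponds to a unique such walk, $\mathfrak{L}_n\bigl[\bigl((I - XA)^{-1}\bigr)_{i,i}\bigr]$ equals the number of Hamiltonian cycles of $D$. Equating the two expressions for this coefficient proves (a).

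For part (b), the right-hand side of (a) is independent of $i$, so summing over $i \in [n]$ and exchanging the order of summation introduces a factor $|\{i : i \notin S\}| = |S^c|$ per term; dividing by $n$ then yields (b). The main obstacle is the combinatorial step: one must carefully verify that the walk weight omits the terminal vertex, so that $x_i$ is contributed exactly once (by $i_0$) and each Hamiltonian cycle is counted with multiplicity one. An alternative route, more in the spirit of expanding both determinant and permanent as signed sums over $\Sym_S \times \Sym_{S^c}$ and regrouping by the resulting $\pi \in \Sym_n$, would hinge on the cancellation identity $(-1)^{|S|}\sgn(\pi|_S) = (-1)^{|T|}$ (where $T$ indexes those cycles of $\pi$ packed into $S$), leaving only single $n$-cycles as survivors; I find the matrix-algebra route above closer to the ambient framework of the paper.
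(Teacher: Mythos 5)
Your proof is correct and follows essentially the same route as the paper: both identify the Hamiltonian cycle count with $\mathfrak{L}_n$ of the $(i,i)$-entry of $(I-XA)^{-1}$, expand that entry via the adjugate/cofactor formula, and apply the coefficient-extraction identities~\eqref{eqn:MacMahon} to the two factors, with (b) obtained by averaging (a) over $i$. Your explicit verification that the walk weight $x_{i_0}\cdots x_{i_{k-1}}$ omits the terminal vertex is a worthwhile detail that the paper leaves implicit.
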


\newcommand{\numham}{\mathrm{ham}^{\circ}(D)}
\begin{proof}
Let $\numham$ denote the number of Hamiltonian cycles in $D$. The $(i,i)$-entry of $(XA)^k$ is the generating function for  walks of length $k$ in $D$ that start and end with $i$.  
So let us set 
$$
 H := I + XA + (XA)^2 + (XA)^3 + \cdots = (I-XA)^{-1}
 = \frac{\mathrm{adj}(I-XA)}{\det(I-XA)},
$$
where $\mathrm{adj}(\cdot)$ denotes the adjugate, i.e. classical adjoint. Then for any $i \in [n]$ we have  
\begin{align*}
\numham =\mathfrak{L}_n\, H_{ii}
&=
\mathfrak{L}_n\, \mathrm{cof}_{ii}(I-XA) \det(I-XA)^{-1} \\
&=
\sum_{S \subseteq [n]}
 \mathfrak{L}_{S}\, \mathrm{cof}_{ii}(I-XA) \ \mathfrak{L}_{S^c} \det(I-XA)^{-1},
 \end{align*}
where $\mathrm{cof}_{ii}$ is the $(i,i)$-cofactor. Expression (a) immediately follows using Equation \eqref{eqn:MacMahon}. Averaging (a) over all $i \in [n]$ then yields (b) as follows:
\begin{align*}
\frac{1}{n} \sum_{i=1}^n \sum_{S \subseteq [n]\setminus\{i\}}
 (-1)^{|S|} \det A[S] \cdot \per A[S^c] &= \frac{1}{n} \sum_{S \subseteq [n]} \sum_{i \notin S} 
 (-1)^{|S|} \det A[S] \cdot \per A[S^c] \\
 &= \frac{1}{n} \sum_{S \subseteq [n]} (-1)^{|S|} |S^c| \det A[S] \cdot \per A[S^c].
\end{align*}
\end{proof}

We note that Proposition~\ref{prop:hamps} can be derived from Proposition~\ref{prop:GJ}(a) by applying the latter to the digraph $D'$ that is obtained from $D$ by adding a new vertex $0$ and a directed edge to and from every vertex in $D$. The number of Hamiltonian cycles in $D'$ is then the number of Hamiltonian paths in ${D}$. The reduction from $A(D')$ to $A(D)$ can be accomplished via Schur complementation.

\section{Symmetric Function Expansions}\label{sec:main}

\subsection{Preliminaries}
We use standard notation for partitions of integers, symmetric functions, and representation theory of the symmetric group, as introduced in Macdonald \cite{macdonald1998symmetric}. We remind the reader of a few key constructs and auxiliary definitions.

Throughout, we work in the ring of symmetric functions $\Lambda_{\vec{z}}=\mathbb{Q}[[z_1,z_2,\ldots]]^{\mathfrak{S}}$ in the indeterminates $\vec{z}=(z_1,z_2,\ldots)$. For a partition $\lambda$, we let $r_{\lambda}! := r_1! r_2! \cdots$, where $r_i$ is the number of parts of $\lambda$ equal to $i$.  In addition to the usual symmetric functions $m_{\lambda},e_{\lambda},h_{\lambda},p_{\lambda},s_{\lambda}$, we employ the \emph{augmented monomial symmetric functions} defined by $\tilde{m}_{\lambda}:= r_{\lambda}!  m_{\lambda}$.   For $f$ in one of the families $\{m_{\lambda},e_{\lambda},h_{\lambda},p_{\lambda},s_{\lambda},\tilde{m}_{\lambda}\}$, we denote by $f_{\cyc(\sigma)}$ the symmetric function $f_{\lambda}$ where $\lambda=(\lambda_1,\lambda_2,\ldots,\lambda_{\ell})$ is the cycle type of $\sigma$. For example, if $\sigma = (1 \ 6 \ 5)(2)(3 \ 4)(7 \ 9)(8) \in \mathfrak{S}_9$ then $p_{\cyc(\sigma)} = p_3p_2^2p_1^2$.

Let $H_{\vec{z}}(t),E_{\vec{z}}(t),P_{\vec{z}}(t) \in \Lambda_{\vec{z}}[[t]]$ be the ordinary generating series in $t$ for $\{h_i\}_{i \geq 0}, \{e_i\}_{i \geq 0}$ and $\{p_{i+1}\}_{i \geq 0}$ respectively. One can readily see that 
\begin{equation}\label{eqn:involution}
H_{\vec{z}}(t)=\sum_{j \geq 0} h_jt^j = \prod_{i \geq 1} \frac{1}{1-z_it}, \hspace{0.2in} E_{\vec{z}}(t)=\sum_{j \geq 0} e_jt^j = \prod_{i \geq 1} (1+z_it).
\end{equation}
Furthermore, one can check that $P_{\vec{z}}(t) = \frac{d}{dt} \log H_{\vec{z}}(t)$ and by Equation~(\ref{eqn:involution}), $P_{\vec{z}}(-t) = \frac{d}{dt} \log E_{\vec{z}}(t)$. By integrating then exponentiating we get 
\begin{equation}\label{eqn:powersum}
H_{\vec{z}}(t) = \exp \left( \sum_{i \geq 1} p_i \frac{t^i}{i} \right), \ E_{\vec{z}}(t) = \exp \left( \sum_{i \geq 1} (-1)^{i-1} p_i \frac{t^i}{i} \right).
\end{equation}
The \emph{fundamental involution} $\omega: \Lambda_{\vec{z}} \to \Lambda_{\vec{z}}$ is the algebra homomorphism defined on the generating set $\{e_n\}_{n \geq 1}$ by $\omega(e_n)=h_n$ for each $n$. From Equation~(\ref{eqn:involution}) we see $\sum_{i=0}^n (-1)^i e_ih_{n-i}=0$ for $n>0$ and from this $\omega(h_n)=e_n$ for all $n$.

We see that (\ref{eqn:involution}) and (\ref{eqn:powersum}) can be used to express one class of symmetric functions in terms of another.  We present a few additional such relations pertinent to our discussion. Recall that given a partition $\lambda=(\lambda_1,\lambda_2,\ldots,\lambda_{\ell})$, its \emph{conjugate} is the partition $\lambda^T=(\lambda^T_1,\lambda^T_2,\ldots)$ where $\lambda^T_i$ is the number of indices $j$ for which $\lambda_j \geq i$. For instance, if $\lambda=(4,3,2,2,1)$ then $\lambda^T=(5,4,2,1)$. The classical Jacobi-Trudi identities express the Schur function $s_{\lambda}$  in terms of homogeneous and elementary symmetric functions as follows:
\begin{align}
\label{eqn:jacobitrudi}
s_{\lambda} = \det [h_{\lambda_i-i+j}] = \det [e_{\lambda^T_i-i+j}].
\end{align}
Here, $h_k=0$ (similarly $e_k=0$) if $k < 0$. The Cauchy identity asserts that in $\mathbb{Q}[[\vec{y},\vec{z}]]$ we have 
\begin{align}
\label{eqn:cauchy}
\sum_{\lambda \vdash n} h_{\lambda}(\vec{y})m_{\lambda}(\vec{z})
=
\sum_{\lambda \vdash n} s_{\lambda}(\vec{y}) s_{\lambda}(\vec{z})
\end{align}
Furthermore it is well-known that $h_{\lambda} = \sum_{\mu} K_{\mu,\lambda}s_{\mu}$ where the coefficients $K_{\mu,\lambda}$ are nonnegative integers (the so-called Kostka numbers).  Yet another classic result shows that if $\mu \vdash n$ then 
\begin{equation}\label{eqn:powertoschur}
p_{\mu} = \sum_{\lambda \vdash n} \chi^{\lambda}(\mu) \cdot s_{\lambda},
\end{equation}
where $\chi^{\lambda}(\mu)$ is the evaluation of the irreducible character $\chi^{\lambda}$ of $\mathfrak{S}_n$ indexed by $\lambda$ evaluated at any permutation whose cycle type is given by $\mu$. 

The characters of representations of $\Sym_n$ show up in another useful context for us. Given an $n \times n$ matrix $A$ and a partition $\lambda \vdash n$, the \emph{immanant} of $A$ indexed by $\lambda$ is the multilinear expression $\text{Imm}_{\lambda}(A) = \sum_{\sigma \in \mathfrak{S}_n} \chi^{\lambda}(\sigma) \prod_{i=1}^n A_{i,\sigma(i)}.$ When $\lambda=(n) \vdash n$ then $\chi^{\lambda}$ is the trivial character given by $\chi^{\lambda}(\sigma)=1$ for all $\sigma \in \mathfrak{S}_n$ so $\text{Imm}_{\lambda}(A)=\text{per}(A)$. Similarly when $\lambda=(1,1,\ldots,1)$ then $\chi^{\lambda}$ is the sign character given by $\chi^{\lambda}(\sigma)=\sgn(\sigma)$ for all $\sigma \in \mathfrak{S}_n$ so $\text{Imm}_{\lambda}(A)=\text{det}(A)$.

In our study we will be concerned with when symmetric functions expand with nonnegative coefficients with respect to a given basis. We say $f \in \Lambda_{\vec{z}}$ is \emph{$p$-positive} if it can be written as a nonnegative linear combination of power sum symmetric functions. We say $f$ is \emph{Schur-positive} if it can be written as a nonnegative linear combination of Schur functions.

\renewcommand{\path}[1]{\mathrm{path}(#1)}
\newcommand{\cycle}[1]{\mathrm{cycle}(#1)}
\newcommand{\augm}{\tilde{m}}
\subsection{Path-Cycle and R\'{e}dei-Berge Symmetric Functions}
Let $D$ be a digraph on $I \subseteq [n]$.
A \emph{path-cycle cover} of  $D$ is a spanning subgraph  of $D$ comprised of a vertex-disjoint union of paths and cycles.  Such a subgraph $S$ induces partitions $\path{S}$ and $\cycle{S}$ whose union is a partition of $|I|$ whose parts are the sizes (i.e. the number of vertices) in their respective paths and cycles.  We say $S$ is a \emph{path cover} (respectively, \emph{cycle cover}) of $D$ if it contains only paths (resp. cycles).

In~\cite{ChowPathCycle}, Chow defines the \emph{path-cycle symmetric function} of $D$ by
$$
\Xi_D(\vec{z},\vec{y})
= \sum_S  \tilde{m}_{\path{S}}(\vec{z})\, p_{\cycle{S}}(\vec{y}),
$$
where the sum extends over all path-cycle covers $S$ of $D$. For example, let $D$ be the digraph on vertex set $[3]$ with edge set $\{(1,1),(1,3),(3,2)\}$. Both $D$ and its complement $\comp{D}$ are displayed in Figure~\ref{fig:pathcycle}, and for these  we have 
\begin{align*}
\Xi_{{D}}(\vec{z},\vec{y}) 
&=
\augm_{1^3} + 2\augm_{21} + \augm_{3}+\augm_{1^2}p_1+\augm_2p_1 \\
\Xi_{\comp{D}}(\vec{z},\vec{y}) 
&= \augm_{1^3} + 4\augm_{21} + 3\augm_3
+  \augm_1p_1^2 + 3 \augm_2p_1 + 2  \augm_{1^2}p_1 + p_1p_2+\augm_1p_2+p_3 
\end{align*}
where the $\augm's$ and $p's$ are in $\Lambda_{\vec{z}}$ and $\Lambda_{\vec{y}}$ respectively. For instance, in $\Xi_{\comp{D}}$, the $4\augm_{21}$ accounts for the four path covers $\{12,3\},\{21,3\},\{23,1\},\{31,2\}$ whereas $3\augm_2p_1$ accounts for the path-cycle covers $\{12,33\},\{21,33\},\{31,22\}$. 

It is easy to verify that $\U_D$ is the following evaluation of $\Xi_{\comp{D}}$, as given by Chow.
\begin{proposition}\cite[Proposition 7]{ChowPathCycle} \label{prop:U_D_path}
For a digraph $D$ on $[n]$ we have
$\U_D=\Xi_{\comp{D}}(\vec{z},0).$  That is, 
\[
\U_D = \sum_S \tilde{m}_{\path{S}} = 
\sum_P z_1^{|P_1|} z_2^{|P_2|} \cdots
\]
where the first sum extends over all path covers $S$ of $\comp{D}$, and the second over all sequences $P=(P_1,P_2,\ldots)$ of vertex-disjoint (possibly empty) paths that cover $\comp{D}$.
\end{proposition}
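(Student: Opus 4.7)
The plan is to reduce the statement to a transparent weight-preserving bijection via two essentially formal simplifications.

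First, I would specialize $\vec{y}=0$ in Chow's definition of $\Xi_{\comp{D}}$. Since $p_k(0)=0$ for $k\geq 1$, the factor $p_{\cycle{S}}(\vec{y})$ annihilates every path-cycle cover $S$ with nonempty cycle part, leaving $\Xi_{\comp{D}}(\vec{z},0) = \sum_S \tilde{m}_{\path{S}}(\vec{z})$ with $S$ ranging over path covers of $\comp{D}$. This gives the first equality in the proposition.

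Next, I would rewrite the augmented monomial symmetric function as
\[
\tilde{m}_{\lambda}(\vec{z}) \;=\; \sum_{(v_1,\ldots,v_\ell)} z_{v_1}^{\lambda_1}\cdots z_{v_\ell}^{\lambda_\ell},
\]
where the sum runs over ordered tuples of \emph{distinct} positive integers; this follows directly from $\tilde{m}_\lambda = r_\lambda!\,m_\lambda$ by counting the $r_\lambda!$ preimages of each distinct monomial. Applied to a path cover $S$ with paths $Q_1,\ldots,Q_\ell$, each injection $\phi:\{Q_1,\ldots,Q_\ell\}\to\mathbb{Z}_{>0}$ corresponds to a sequence $(P_1,P_2,\ldots)$ of vertex-disjoint (possibly empty) paths in $\comp{D}$ covering $[n]$, with weight $z_1^{|P_1|}z_2^{|P_2|}\cdots$. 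Summing over $S$ and over injections yields the second equality.

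The substance of the proof is then the identity $\U_D = \sum_P z_1^{|P_1|}z_2^{|P_2|}\cdots$, which I would establish by a weight-preserving bijection. A monomial $z_{i_1}\cdots z_{i_n}$ contributing to $F_{\Des_D(\pi)}$ satisfies $i_1\leq\cdots\leq i_n$ with strict inequality at positions in $\Des_D(\pi)$; equivalently, $i_j=i_{j+1}$ forces $(\pi_j,\pi_{j+1})\notin E(D)$, and hence $(\pi_j,\pi_{j+1})\in E(\comp{D})$ since $\pi_j\neq\pi_{j+1}$. Partitioning $(i_j)$ into maximal constant runs, the run of value $v$ reads off a path $P_v$ in $\comp{D}$ along $\pi$, while unused colors contribute empty paths. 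The inverse map concatenates $P_1,P_2,\ldots$ in color order to produce $\pi$ and records $i_j=v$ whenever position $j$ lies in $P_v$.

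The only real obstacle is bookkeeping: keeping straight the dictionary between non-$D$-descents of $\pi$ and $\comp{D}$-edges along $\pi$, and verifying that empty paths correctly absorb colors absent from $(i_j)$. With these checks in place the bijection is manifestly weight-preserving, and the three expressions in the proposition coincide.
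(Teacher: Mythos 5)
Your proof is correct and complete. The paper gives no argument for this proposition---it cites Chow's Proposition 7 and calls the identity easy to verify---and the verification you supply is exactly the intended one: the weight-preserving bijection sending a pair $(\pi, i_1\le\cdots\le i_n)$ to the sequence of $\overline{D}$-paths cut out by the maximal constant runs of the $i$'s (using that $i_j=i_{j+1}$ forces $(\pi_j,\pi_{j+1})\notin E(D)$ and $\pi_j\neq\pi_{j+1}$, hence a $\overline{D}$-edge, with unused colors giving empty paths), together with the $r_\lambda!$ count identifying injectively colored path covers with $\tilde{m}_{\mathrm{path}(S)}$, so your write-up simply fills in the details the paper leaves to the citation.
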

So for instance, for the digraph $D$ in Figure~\ref{fig:pathcycle} and its complement,
\begin{equation}\label{eqn:monomialequation}
\U_D = \augm_{1^3} + 4\augm_{21} + 3\augm_3, \qquad \U_{\comp{D}} = \augm_{1^3} + 2 \augm_{21} + \augm_{3}
\end{equation}
We can express these in terms of power sum symmetric functions as follows:
\begin{equation}\label{eqn:powersumequation}
\U_D = p_{1^3}+p_{21}+p_3,  \qquad \U_{\comp{D}} = p_{1^3}-p_{21}+p_3 
\end{equation}
\begin{figure}[t]
\begin{center}
    \includegraphics[height=2.5cm]{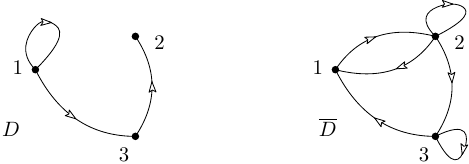}
\end{center}
\caption{A digraph with $D$ and its complement $\comp{D}$.}\label{fig:pathcycle}
\end{figure}

Proposition~\ref{prop:U_D_path} together with Lemma~\ref{lem:walks} will allow us to expand $\U_D$ in terms of power sum symmetric functions in Theorem~\ref{thm:main}. To start, we need the following lemma. 

\newcommand{\SymC}[1]{\mathfrak{C}_{#1}}

\begin{lemma}\label{lem:coefficientextraction}
   For an $n \times n$ matrix $A$ and $I \subseteq [n]$, we have
   \begin{enumerate}
    \item[(a)] $\displaystyle \mathfrak{L}_I\det H_{\vec{z}}(XA) =\sum_{\sigma \in \mathfrak{S}_I} p_{\cyc(\sigma)} \prod_{i \in I} {A}_{i,\sigma(i)}$,
    \item[(b)] $\displaystyle \mathfrak{L}_I\det E_{\vec{z}}(XA)  =\sum_{\sigma \in \mathfrak{S}_I} p_{\cyc(\sigma)} \sgn(\sigma) \prod_{i \in I} A_{i,\sigma(i)}$.
   \end{enumerate}
\end{lemma}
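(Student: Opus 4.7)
The plan is to pass from $H_{\vec z}$ to its exponential form via Equation~(\ref{eqn:powersum}), invoke Jacobi's identity~(\ref{eqn:Jacobi}) to move the determinant through the exponential, and then read off the multilinear coefficient of $\prod_{i \in I} x_i$ from a closed-walk expansion. Concretely, I would substitute $t = XA$ in the identity $H_{\vec z}(t) = \exp\!\bigl(\sum_{k \ge 1} p_k t^k / k\bigr)$, obtaining the matrix-valued identity $H_{\vec z}(XA) = \exp\!\bigl(\sum_{k \ge 1} p_k (XA)^k / k\bigr)$. Applying Jacobi's identity then gives
$$
\det H_{\vec z}(XA) \;=\; \exp\!\Biggl(\sum_{k \ge 1} \frac{p_k}{k}\,\tr\!\bigl((XA)^k\bigr)\Biggr).
$$

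Next I would expand the trace as $\tr((XA)^k) = \sum_{(i_1,\ldots,i_k) \in [n]^k} x_{i_1}\cdots x_{i_k}\, A_{i_1,i_2} A_{i_2,i_3}\cdots A_{i_k,i_1}$. For a closed walk whose vertices are distinct and form a set $S$ of size $k$, each of its $k$ cyclic shifts contributes the same monomial, so the factor $1/k$ normalizes these out: the multilinear part of $\tr((XA)^k)/k$ supported on such an $S$ equals $\bigl(\sum_\tau \prod_{i \in S} A_{i,\tau(i)}\bigr)\prod_{i \in S} x_i$, where $\tau$ ranges over $k$-cycles in $\Sym_S$. Expanding the outer exponential as $\sum_{(m_k)} \prod_k (1/m_k!)\bigl(p_k\tr((XA)^k)/k\bigr)^{m_k}$ and extracting $\mathfrak{L}_I$ forces the supports coming from the trace-factors to form a set partition of $I$, and the factors $1/m_k!$ cancel the orderings on blocks of equal size. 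Because a set partition of $I$ together with a single cycle on each block is precisely a permutation $\sigma \in \Sym_I$ via its disjoint cycle decomposition, the contribution collapses to $\sum_{\sigma \in \Sym_I} p_{\cyc(\sigma)}\prod_{i \in I} A_{i,\sigma(i)}$, establishing (a).

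For (b), the same argument applies verbatim with $H$ replaced by $E$, the only change being the sign $(-1)^{k-1}$ appearing in the exponential form of $E_{\vec z}$ from Equation~(\ref{eqn:powersum}). This attaches a factor $(-1)^{k-1}$ to each cycle of length $k$, and the product of these factors over the cycles of $\sigma$ equals $\sgn(\sigma)$. The main obstacle I anticipate is the careful bookkeeping to confirm that the $1/k$ and $1/m_k!$ factors in the exponential expansion precisely cancel the multiplicities arising from cyclic shifts within each trace and from permuting equal-size blocks, so that each $\sigma \in \Sym_I$ occurs with coefficient exactly $p_{\cyc(\sigma)}$ (respectively $\sgn(\sigma)\, p_{\cyc(\sigma)}$); this is essentially the exponential formula applied to cycle-labeled set partitions.
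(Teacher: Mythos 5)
Your proposal is correct and follows essentially the same route as the paper: Jacobi's identity reduces $\det H_{\vec z}(XA)$ to $\exp\bigl(\sum_k p_k \tr((XA)^k)/k\bigr)$, the $1/k$ cancels the cyclic shifts so that each $k$-subset contributes its full cycles, and reassembling the exponential recovers the disjoint cycle decompositions of permutations of $I$, with the $(-1)^{k-1}$ signs in $E_{\vec z}$ producing $\sgn(\sigma)$ for part (b). The only cosmetic difference is that the paper handles the bookkeeping you flag by passing to the quotient ring where $x_i^2=0$ (so each $\exp(u)$ becomes $1+u$ and the exponential is a finite product over nonempty subsets $J\subseteq I$), rather than expanding via multinomial coefficients and cancelling the $1/m_k!$ factors.
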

\begin{proof}
 By Jacobi's identity \eqref{eqn:Jacobi} and Equation~(\ref{eqn:powersum}) we have
\begin{align}
\det H_{\vec{z}}(XA) 
= \det \exp \left( \sum_k \frac{1}{k} p_k(XA)^k \right)
=  \exp \left( \sum_k \frac{1}{k} p_k   \tr(XA)^k\right).
\label{eq:H_p_expansion} 
\end{align}
The computation of $\mathfrak{L}_I \det H_{\vec{z}}(XA)$ is  unaffected by setting $x_j^2=0$ for $j \in I$ and $x_{j}=0$ for $j \not \in I$. (More formally, we are passing to the quotient by the ideal generated by $\{x_i^2 \colon i \in I\} \cup \{x_j \colon j \notin I\}$.)

 Under these reductions we have
 \begin{align*}
    \frac{1}{k} \tr(XA)^k 
    &= 
    \frac{1}{k} \sum_{\substack{\text{distinct} \\ j_1,\ldots,j_k \in I}} 
    x_{j_1}A_{j_1,j_2} \cdot x_{j_2} A_{j_2,j_3} \cdots x_{j_k}A_{j_k,j_1} 
    =
    \sum_{\substack{J \subseteq I \\ |J|=k}} 
    \sum_{\substack{ \sigma \in \SymC{J}}} 
    \prod_{j \in J} x_j A_{j,\sigma(j)},
\end{align*}
where $\SymC{J}$ denotes the set of full cycles on $J$ (i.e.  $\sigma \in \Sym_J$ composed of a single cycle). From Equation~\eqref{eq:H_p_expansion}, this gives
\begin{align*}
\det H_{\vec{z}}(XA) &= \exp \left( \sum_{\emptyset \subsetneq J \subseteq I} p_{|J|} \sum_{\sigma \in \SymC{J}} \prod_{j \in J} x_j A_{j,\sigma(j)} \right) \\
&= \prod_{\emptyset \subsetneq J \subseteq I} \exp \left( p_{|J|} \sum_{\sigma \in \SymC{J}} \prod_{j \in J} x_j A_{j,\sigma(j)}  \right). 
\end{align*}
Since $\exp(u) = 1+u$ when $u^2 = 0$, it follows that
\begin{align*}
   \mathfrak{L}_I \det H_{\vec{z}}(XA) 
&= \mathfrak{L}_I \prod_{\emptyset \subsetneq J \subseteq I} \left(1 + p_{|J|} \sum_{\sigma \in \SymC{J}} \prod_{j \in J} x_j A_{j,\sigma(j)} \right) \\
&= \sum_{\substack{\emptyset \subsetneq J_1,J_2,\ldots,J_k \subseteq I \\ J_1 \cup J_2 \cup \cdots \cup J_k = I}} \prod_{s=1}^k \left( p_{|J_s|} \sum_{\sigma \in \SymC{J_s}} \prod_{j \in J_s} A_{j,\sigma(j)}\right).
\end{align*}
Identity (a) now comes by considering the disjoint cycle decomposition of a given $\sigma \in \Sym_I$.  Statement (b) can be derived similarly, with $\sgn(\sigma)$ coming from the contribution $(-1)^{k-1}p_k$ for each $k$-cycle in the disjoint cycle decomposition of $\sigma$.
\end{proof}

Observe that Equation~\eqref{eqn:MacMahon} is recovered from  Lemma~\ref{lem:coefficientextraction} by setting $I=S$ and $\vec{z}=(1,0,0,\ldots)$.  Similarly, setting $z_i=1$ for $1 \leq i \leq \alpha$ and $z_i=0$ for $i > \alpha$ yields an analogue of Equation~\eqref{eqn:MacMahon} for the \emph{$\alpha$-permanent}; see~\cite{foata1988laguerre,vere1988generalization}.

\begin{corollary}\label{cor:cyclecover}
   Let $D$ be a digraph on $[n]$ with adjacency matrix $A$ and let $I \subseteq [n]$. Then
   $$
   \mathfrak{L}_I \det H_{\vec{z}}(XA)
   = \sum_{S} p_{\cycle{S}}, 
   $$
   where the sum extends over all cycle covers $S$ of the subgraph $D[I]$ induced by $I$.
\end{corollary}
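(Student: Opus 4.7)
The plan is to deduce the corollary directly from Lemma~\ref{lem:coefficientextraction}(a) by giving a combinatorial interpretation of the resulting sum. First I would write
\[
\mathfrak{L}_I \det H_{\vec{z}}(XA) = \sum_{\sigma \in \mathfrak{S}_I} p_{\cyc(\sigma)} \prod_{i \in I} A_{i,\sigma(i)}
\]
using Lemma~\ref{lem:coefficientextraction}(a), and then observe that since $A \in \{0,1\}^{n \times n}$, the product $\prod_{i \in I} A_{i,\sigma(i)}$ equals $1$ when $(i,\sigma(i))$ is an edge of $D$ for every $i \in I$, and equals $0$ otherwise. Thus only those $\sigma \in \mathfrak{S}_I$ contribute whose ordered pairs $(i,\sigma(i))$ all lie in the edge set of $D[I]$.

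Next I would invoke the standard bijection between such permutations and cycle covers of $D[I]$. Namely, given $\sigma \in \mathfrak{S}_I$ whose product of edge indicators is nonzero, its disjoint cycle decomposition $(i_0\ i_1\ \cdots\ i_k)$ determines cycles $(i_0,i_1,\ldots,i_k,i_0)$ of $D[I]$, whose union is a cycle cover $S$ of $D[I]$; conversely, every cycle cover $S$ of $D[I]$ arises from a unique such $\sigma$, namely the permutation whose cycle decomposition records the constituent cycles of $S$. This is precisely the identification between $D$-cycles and cycles in the disjoint cycle decomposition of permutations recalled in Section~\ref{sec:walks}.

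Under this bijection the cycle type of $\sigma$ matches the partition $\cycle{S}$ recording cycle sizes of $S$, so $p_{\cyc(\sigma)} = p_{\cycle{S}}$. Regrouping the sum accordingly then gives
\[
\sum_{\sigma \in \mathfrak{S}_I} p_{\cyc(\sigma)} \prod_{i \in I} A_{i,\sigma(i)} = \sum_{S} p_{\cycle{S}},
\]
with $S$ ranging over cycle covers of $D[I]$, which is the desired identity.

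I do not anticipate any real obstacle: the content lies entirely in Lemma~\ref{lem:coefficientextraction}(a), and the remaining step is merely the translation between permutations in $\mathfrak{S}_I$ whose associated edge-product is nonzero and cycle covers of $D[I]$. The only mild care required is ensuring the bijection respects cycle type, but this is immediate from the definitions of $\cyc(\sigma)$ and $\cycle{S}$.
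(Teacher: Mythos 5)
Your proposal is correct and follows exactly the paper's route: apply Lemma~\ref{lem:coefficientextraction}(a) and observe that $\prod_{i \in I} A_{i,\sigma(i)}$ is nonzero precisely when every cycle of $\sigma$ is a $D[I]$-cycle, which identifies the surviving permutations with cycle covers of $D[I]$ in a cycle-type-preserving way. The paper states this in one line; you have merely spelled out the bijection in more detail.
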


\begin{proof}
    Apply Lemma~\ref{lem:coefficientextraction}(a), noting that $\prod_i A_{i,\sigma(i)}$ is 1 if all cycles of $\sigma$ are $D[I]$-cycles and 0 otherwise.
\end{proof}

We now expand $\U_D$ in terms of power sum symmetric functions.   

\begin{theorem}\label{thm:main}
Let $D$ be a digraph on $[n]$. The following are equivalent expressions for $\U_D$:
\begin{enumerate}
\item[(a)] $\displaystyle \U_D = \mathfrak{L}_n \ \det H_{\vec{z}}(X\overline{A}) \cdot \det E_{\vec{z}}(XA)$,
\item[(b)]  $\displaystyle \U_D
 = \mathfrak{L}_n\, \exp  \Big( \sum_k \frac{1}{k}p_k\big(\tr (X\comp{A})^k + (-1)^{k-1} \tr(XA)^k\big)\Big)$,
\item[(c)] $\displaystyle \mathcal{U}_D =
 \sum_{I \subseteq [n]} \left( \sum_{\sigma \in \mathfrak{S}_{I^c}} p_{\cyc(\sigma)} \prod_{i \in I^c} \overline{A}_{i,\sigma(i)} \right) \left( \sum_{\tau \in \mathfrak{S}_{I}} p_{\cyc(\tau)} \sgn(\tau) \prod_{i \in I} A_{i,\tau(i)} \right).$
\end{enumerate}
\end{theorem}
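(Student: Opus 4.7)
The plan is to derive (a) from Lemma~\ref{lem:walks} together with Proposition~\ref{prop:U_D_path}, and then obtain (b) and (c) as immediate corollaries via Jacobi's identity and Lemma~\ref{lem:coefficientextraction}, respectively.

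For (a), I would start from the product formulas $H_{\vec{z}}(t) = \prod_{i\geq 1}(1-z_it)^{-1}$ and $E_{\vec{z}}(t) = \prod_{i \geq 1}(1+z_it)$ of Equation~\eqref{eqn:involution} to factor
\[
\det H_{\vec{z}}(X\comp{A}) \cdot \det E_{\vec{z}}(XA) \;=\; \prod_{i \geq 1} \frac{\det(I+z_iXA)}{\det(I - z_iX\comp{A})}.
\]
By Lemma~\ref{lem:walks} applied to $\comp{D}$ (exchanging the roles of $A$ and $\comp{A}$), each factor on the right equals $W_{\comp{D}}(z_i)$. Writing $W_{\comp{D}}(z_i) = 1 + \sum_{k \geq 1} \gamma_k(\comp{D}) z_i^k$ and applying $\mathfrak{L}_n$ extracts the coefficient of $x_1 \cdots x_n$ in this infinite product. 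For this coefficient to be nonzero, each vertex of $[n]$ must appear exactly once across the walks chosen from the various factors, forcing every nonempty walk to be a path and the collection to form an ordered sequence of vertex-disjoint paths (with all but finitely many empty) covering $\comp{D}$. By Proposition~\ref{prop:U_D_path}, the resulting series is precisely $\U_D$.

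For (b), I would combine Jacobi's identity~\eqref{eqn:Jacobi} with the power-sum expansions~\eqref{eqn:powersum}:
\[
\det H_{\vec{z}}(X\comp{A}) = \exp\sum_{k \geq 1} \frac{p_k}{k}\tr(X\comp{A})^k, \qquad \det E_{\vec{z}}(XA) = \exp\sum_{k \geq 1} \frac{(-1)^{k-1} p_k}{k} \tr(XA)^k,
\]
then multiply and apply (a). For (c), I would use $\mathfrak{L}_n(fg) = \sum_{I \subseteq [n]} (\mathfrak{L}_{I^c} f)(\mathfrak{L}_{I} g)$ and invoke Lemma~\ref{lem:coefficientextraction}(a) for $\mathfrak{L}_{I^c} \det H_{\vec{z}}(X\comp{A})$ and Lemma~\ref{lem:coefficientextraction}(b) for $\mathfrak{L}_I \det E_{\vec{z}}(XA)$.

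The only subtlety I anticipate is bookkeeping around the infinite product $\prod_{i \geq 1} W_{\comp{D}}(z_i)$ in step (a); however, this causes no actual difficulty as a formal power series identity, since each constant term (in the $z_i$'s) of a factor is $1$ and only finitely many factors can contribute nontrivially to any fixed monomial in $\vec{x}$ or in $\vec{z}$.
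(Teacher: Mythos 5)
Your proposal is correct and follows essentially the same route as the paper: both rest on the identity $\U_D=\mathfrak{L}_n\,W_{\comp{D}}(z_1)W_{\comp{D}}(z_2)\cdots$ from Proposition~\ref{prop:U_D_path}, the factorization of $\det H_{\vec{z}}(X\comp{A})\det E_{\vec{z}}(XA)$ into the factors $W_{\comp{D}}(z_i)$ via Lemma~\ref{lem:walks} and Equation~\eqref{eqn:involution}, and then Jacobi's identity for (b) and Lemma~\ref{lem:coefficientextraction} for (c). The only cosmetic difference is that you run part (a) from right to left and spell out the path-cover bookkeeping that the paper leaves implicit.
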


\begin{proof}
Since $W_{\comp{D}}(z)$ is the generating series for walks in $\comp{D}$, Proposition~\ref{prop:U_D_path} implies \[\U_D=\mathfrak{L}_n\, W_{\comp{D}}(z_1)W_{\comp{D}}(z_2)\cdots.\] From this and Lemma~\ref{lem:walks} we get
\begin{align}
    \mathcal{U}_D  
    &= \mathfrak{L}_n\, \prod_{i} 
    \frac{\det( I+z_i XA) }{\det(I-z_iX\comp{A})} \label{eqn:UD_WD} \\
    &=\mathfrak{L}_n \det \prod_i (I-z_iX\comp{A})^{-1} \cdot \det \prod_i \left(I+z_iXA\right) \notag\\
    &= \mathfrak{L}_n\, \det H_{\vec{z}}(X\comp{A}) \det E_{\vec{z}}(XA), \notag
\end{align}
where the final equality is due to Equation~(\ref{eqn:involution}). Now (b) follows from Equation~\eqref{eq:H_p_expansion} in the proof of Lemma~\ref{lem:coefficientextraction} (and its analogue for $E_{\vec{z}}(XA)$) and (c) follows immediately from (a) by the same lemma.
\end{proof}

For example, recall the digraph $D$ from Figure~\ref{fig:pathcycle}. We see that
\[
A = \begin{pmatrix}
1 & 0 & 1 \\
0 & 0 & 0 \\
0 & 1 & 0 
    \end{pmatrix}, \ \  
\comp{A} = \begin{pmatrix}
0 & 1 & 0 \\
1 & 1 & 1 \\
1 & 0 & 1 
\end{pmatrix}.
\]
Thus the only non-zero contributions to the sum in Theorem~\ref{thm:main}(c)  occur when $I=\{1\}$ or $I=\emptyset$. When $I=\{1\}$ the contribution comes from $\tau=(1), \sigma=(2)(3)$, and the contribution itself is $p_{\cyc((1)(2)(3))}=p_{1^3}$. When $I=\emptyset$, there is one contribution from each of $\sigma=(12)(3)$ and $\sigma=(123)$, namely $p_{21}$ and $p_3$, respectively. Altogether we find $\U_D=p_1^3+p_{21}+p_3$, in accord with Equation~(\ref{eqn:powersumequation}).

Chow \cite{ChowPathCycle} and independently Grinberg and Stanley \cite{GrinbergStanley} show that the fundamental involution $\omega$ acts on $\U_D$ by mapping it to $\U_{\comp{D}}$, as can be observed, for example, in Equation~(\ref{eqn:powersumequation}). We now see that this follows directly from  Theorem~\ref{thm:main}. 

\begin{proposition}\label{prop:involution}
For a digraph $D$ on $[n]$, $\U_{\overline{D}}=\omega(\U_D)$.
\end{proposition}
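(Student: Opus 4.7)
The plan is to derive this directly from Theorem~\ref{thm:main}(a), exploiting the fact that the fundamental involution $\omega$ interchanges the roles of $H_{\vec{z}}$ and $E_{\vec{z}}$. Concretely, Theorem~\ref{thm:main}(a) provides the starting identity
\[
\U_D = \mathfrak{L}_n \ \det H_{\vec{z}}(X\overline{A}) \cdot \det E_{\vec{z}}(XA),
\]
and the same theorem applied to $\comp{D}$ (whose complement is $D$, so the roles of $A$ and $\comp{A}$ swap) gives
\[
\U_{\overline{D}} = \mathfrak{L}_n \ \det H_{\vec{z}}(X{A}) \cdot \det E_{\vec{z}}(X\overline{A}).
\]
The goal is thus to show that $\omega$, applied to the first expression, yields the second.

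The key step is that $\omega$ acts only on the symmetric function variables $\vec{z}$ and commutes with the coefficient operator $\mathfrak{L}_n$, which extracts coefficients in the $x_i$'s. Once this is justified, one pushes $\omega$ past $\mathfrak{L}_n$. Inside, each entry of the matrix $H_{\vec{z}}(X\overline{A}) = \sum_{k \geq 0} h_k (X\overline{A})^k$ is a polynomial in the $x_i$'s whose coefficients are $\mathbb{Q}$-linear combinations of the $h_k$. Since $\omega$ is an algebra homomorphism with $\omega(h_k)=e_k$, applying $\omega$ entrywise and then taking determinants converts $\det H_{\vec{z}}(X\overline{A})$ to $\det E_{\vec{z}}(X\overline{A})$; similarly $\det E_{\vec{z}}(XA)$ becomes $\det H_{\vec{z}}(XA)$. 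This yields
\[
\omega(\U_D) = \mathfrak{L}_n\ \det E_{\vec{z}}(X\overline{A}) \cdot \det H_{\vec{z}}(XA),
\]
which matches the expression for $\U_{\overline{D}}$ above.

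The only subtlety, which I expect to be the sole mildly delicate point, is verifying the formal claim that $\omega$ commutes with $\mathfrak{L}_n$ and with the determinant expansion; this is essentially bookkeeping. One clean way is to observe that the coefficient of any fixed monomial $x_{i_1}\cdots x_{i_k}$ in $\det H_{\vec{z}}(X\comp{A})\det E_{\vec{z}}(XA)$ is a polynomial expression in $h_1,h_2,\ldots,e_1,e_2,\ldots$, obtained by fully expanding both determinants and collecting; since $\omega$ is a ring homomorphism, applying it term-by-term after coefficient extraction agrees with applying it (entrywise in $h_k \mapsto e_k$, $e_k \mapsto h_k$) before extraction. With this verification in place, the proof reduces to the short chain of equalities outlined above.
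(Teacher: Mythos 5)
Your argument is correct and is essentially identical to the paper's proof: both apply Theorem~\ref{thm:main}(a) to $D$ and to $\comp{D}$, and both use the fact that $\omega$ swaps $h_k \leftrightarrow e_k$ entrywise (hence $H_{\vec{z}} \leftrightarrow E_{\vec{z}}$) while commuting with $\mathfrak{L}_n$ and the determinant. The extra bookkeeping you supply about $\omega$ acting only on the $\vec{z}$-coefficients is implicit in the paper's one-line computation.
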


\begin{proof}
This follows directly from Theorem~\ref{thm:main}(a) and the observation that $$\omega(\det H_{\vec{z}}(X\comp{A}) \det E_{\vec{z}}(XA)) = \det \omega H_{\vec{z}}(X\comp{A}) \det \omega E_{\vec{z}}(XA)=\det E_{\vec{z}}(X\comp{A})\det H_{\vec{z}}(XA).$$
\end{proof}

Gruji\'{c} and Stojadinovi\'{c} \cite{redeibergehopfalgebra} give a related result showing that  $\U_{D^{op}}=\U_D$.

\begin{proposition}\cite[Proposition 4.8]{redeibergehopfalgebra}\label{prop:opp}
For a digraph $D$ on $[n]$, $\U_{D^{op}}=\U_D$.    
\end{proposition}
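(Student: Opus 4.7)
The plan is to apply Theorem~\ref{thm:main}(a) to $\Dop$ and match it term-by-term with the expression for $\U_D$. Since $\Aop = A^T$, and since complementation commutes with transposition (both being linear operations on $A$ via $\comp{A} = \vec{1}\vec{1}^T - A$), we also have $A(\comp{\Dop}) = \comp{A}^T$. Thus it suffices to prove the purely matrix-algebraic identities
$$
\det H_{\vec{z}}(XM^T) = \det H_{\vec{z}}(XM) \qquad\text{and}\qquad \det E_{\vec{z}}(XM^T) = \det E_{\vec{z}}(XM)
$$
in $\Lambda_{\vec{z}}[x_1,\ldots,x_n]$ for any $n \times n$ matrix $M$; applying these to $M = \comp{A}$ and $M = A$ respectively and then extracting $\mathfrak{L}_n$ immediately yields $\U_{\Dop} = \U_D$.

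The key step is to reduce each of these determinantal identities to a trace identity. Invoking Jacobi's identity \eqref{eqn:Jacobi} and Equation~\eqref{eqn:powersum}, exactly as in the proof of Lemma~\ref{lem:coefficientextraction}, we have
$$
\det H_{\vec{z}}(XN) = \exp\Big(\sum_{k \geq 1}\tfrac{p_k}{k}\tr((XN)^k)\Big),
$$
and an analogous expression for $\det E_{\vec{z}}(XN)$ with $p_k$ replaced by $(-1)^{k-1}p_k$. Consequently it is enough to verify that $\tr((XM)^k) = \tr((XM^T)^k)$ for every $k \geq 1$. Since $X$ is diagonal we have $X^T = X$, so
$$
\tr((XM)^k) = \tr\big(((XM)^k)^T\big) = \tr((M^TX)^k) = \tr((XM^T)^k),
$$
the last equality being cyclic invariance of the trace. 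This completes the reduction and hence the proof.

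No genuine obstacle arises; the argument is a direct exploitation of the matrix framework already developed. The only subtlety to check is that the operations ``complement'' and ``opposite'' commute at the level of adjacency matrices, which follows from the bilinearity visible in $\comp{A} = \vec{1}\vec{1}^T - A$. We note in passing that a purely combinatorial proof is also available from Proposition~\ref{prop:U_D_path}: reversing each path in a path cover $(P_1,P_2,\ldots)$ of $\comp{\Dop}$ yields a path cover of $\comp{D}$ with identical multiset of path lengths, giving a weight-preserving bijection between the two collections summed in Proposition~\ref{prop:U_D_path}.
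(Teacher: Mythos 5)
Your proof is correct, but it routes through a different part of the machinery than the paper does. The paper deduces the result from Theorem~\ref{thm:main}(c) by the combinatorial observation that reversing a cycle turns $D$-cycles into $\Dop$-cycles and $\comp{D}$-cycles into $\comp{D}^{op}$-cycles (equivalently, the sign- and cycle-type-preserving bijection $\tau \mapsto \tau^{-1}$ matches the summands for $\Dop$ with those for $D$), and then notes, exactly as in your closing remark, that one can instead reverse the paths in the covers of Proposition~\ref{prop:U_D_path}. You work instead from Theorem~\ref{thm:main}(a), reducing everything to the linear-algebra identities $\det H_{\vec{z}}(XM^T)=\det H_{\vec{z}}(XM)$ and $\det E_{\vec{z}}(XM^T)=\det E_{\vec{z}}(XM)$, which you verify via Jacobi's identity and $\tr\bigl((XM)^k\bigr)=\tr\bigl((XM^T)^k\bigr)$. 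The two arguments are really two faces of the same fact --- transpose-invariance of traces is the matrix avatar of the inversion bijection on permutations --- but yours has the virtue of isolating a clean, reusable statement ($\det H_{\vec{z}}(XM)$ and $\det E_{\vec{z}}(XM)$ depend on $M$ only up to transpose), and it stays entirely within the matrix-algebra framework the paper advocates, whereas the paper's version makes the underlying combinatorics (cycle reversal) more visible. Your check that complementation commutes with transposition, so that $A(\comp{\Dop})=\comp{A}^T$, is the right point to flag and is handled correctly.
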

\begin{proof}
The result follows from Theorem~\ref{thm:main}(c)  by observing that a cycle $(i_0,i_1,\ldots,i_k,i_0)$ is in $D$ (respectively, $\comp{D}$) if and only if the cycle $(i_0,i_k,i_{k-1},\ldots,i_1,i_0)$ is in $D^{op}$ (resp. $\comp{D}^{op}$). This can also be seen directly from Proposition~\ref{prop:U_D_path}, since path covers of $D^{op}$ are evidently obtained by reversing the paths in covers of $D$.
\end{proof}

\subsection{Power Sum Expansions}
We now apply Theorem~\ref{thm:main} to recover several theorems from the literature on the expansion of $\U_D$ in the power sum basis. We particularly investigate digraphs $D$ for which $\U_D$ is $p$-positive, and interpret their coefficients combinatorially where possible. For convenience we borrow the following nomenclature  from~\cite{GrinbergStanley}.

\begin{definition}\cite{GrinbergStanley}
 We let $\mathfrak{S}_I(D)$ (respectively  $\mathfrak{S}_I(D,\overline{D})$) denote the set of permutations in $\mathfrak{S}_I$ whose nontrivial cycles are all $D$-cycles (respectively all $D$-cycles or $\overline{D}$-cycles). We simply write $\Sym(D)$ and $\Sym(D,\comp{D})$ for $\Sym_{[n]}(D)$ and $\Sym_{[n]}(D,\comp{D})$ respectively. 
\end{definition}

The following theorem of Grinberg and Stanley gives an explicit expansion of $\U_D$ in terms of power sum symmetric functions.

\begin{corollary}\label{cor:U_Dpowersum}\cite[Theorem 1.3]{GrinbergStanley}
Let $D$ be a digraph on $[n]$. For $\sigma \in \mathfrak{S}_n$, define
\[
\phi(\sigma) := \sum_{\substack{\gamma \in \mathrm{Cycs}(\sigma) \\ \text{$\gamma$ a $D$-cycle}}} (\ell(\gamma)-1),
\]
where $\mathrm{Cycs}( \sigma)$ denotes  the set of cycles in the disjoint cycle decomposition of $\sigma$ and  $\ell(\gamma)$ is the length of $\gamma \in \mathrm{Cycs}(\sigma)$. Then
\[
\U_D = \sum_{\sigma \in \mathfrak{S}(D,\overline{D})}(-1)^{\phi(\sigma)} p_{\mathrm{cyc}(\sigma)}.
\]
\end{corollary}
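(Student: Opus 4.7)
The plan is to derive the corollary from Theorem~\ref{thm:main}(c) by consolidating the triple sum over $(I,\sigma,\tau)$ into a single sum over $\Sym(D,\comp{D})$. First I would unpack the indicator behaviour of the matrix products appearing in (c). The product $\prod_{i \in I} A_{i,\tau(i)}$ equals $1$ precisely when every cycle of $\tau$, \emph{including fixed points}, is a $D$-cycle in the strict sense: nontrivial cycles use only edges of $D$, and each fixed point $(i)$ of $\tau$ requires $(i,i)$ to be a loop of $D$. Similarly $\prod_{i \in I^c} \comp{A}_{i,\sigma(i)} = 1$ forces all cycles of $\sigma$ to use only edges of $\comp{D}$, with each fixed-point $(i)$ requiring $(i,i) \notin D$.

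Next I would establish a bijection between the nonzero-contributing triples $(I,\sigma,\tau)$ and permutations $\pi \in \Sym(D,\comp{D})$. Given such a $\pi$, each nontrivial cycle is unambiguously either a $D$-cycle or a $\comp{D}$-cycle (the two edge sets being disjoint), and each fixed point $(i)$ is unambiguously of $D$-type or $\comp{D}$-type according to whether $(i,i)$ is a loop of $D$. Letting $I(\pi) \subseteq [n]$ be the union of the vertex sets of the $D$-type cycles of $\pi$, the restrictions $\tau := \pi|_{I(\pi)}$ and $\sigma := \pi|_{I(\pi)^c}$ satisfy the indicator conditions above, and this is the unique such triple attached to $\pi$. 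Conversely, any contributing triple glues into a permutation $\pi = \tau \sqcup \sigma \in \Sym(D,\comp{D})$.

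Under this bijection one has $p_{\cyc(\sigma)}\,p_{\cyc(\tau)} = p_{\cyc(\pi)}$, since the multiset of cycle lengths of $\pi$ is the union of those of $\sigma$ and $\tau$. For the sign,
\[
\sgn(\tau) = \prod_{\gamma \in \mathrm{Cycs}(\tau)} (-1)^{\ell(\gamma)-1} = \prod_{\substack{\gamma \in \mathrm{Cycs}(\pi) \\ \gamma \text{ a } D\text{-cycle}}} (-1)^{\ell(\gamma)-1} = (-1)^{\phi(\pi)},
\]
because fixed-point $D$-cycles (loops) contribute $\ell(\gamma)-1 = 0$ and so do not affect the sign, while nontrivial $D$-cycles contribute exactly as in the definition of $\phi$. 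Substituting these identifications into Theorem~\ref{thm:main}(c) produces the claimed expansion.

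The main subtlety, rather than an obstacle, is reconciling two descriptions of fixed points: the definition of $\Sym(D,\comp{D})$ places no constraint on them, whereas the matrix expansion assigns each fixed point rigidly to either $I$ or $I^c$ based on whether $D$ has a loop there. Because loops are binary, this rigid assignment is well-defined and unique for every $\pi \in \Sym(D,\comp{D})$, which is precisely what makes the bijection work and why no double-counting occurs.
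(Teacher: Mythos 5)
Your proposal is correct and follows essentially the same route as the paper's proof: both start from Theorem~\ref{thm:main}(c), identify the nonzero summands as those where $\sigma$ has only $\comp{D}$-cycles and $\tau$ only $D$-cycles, concatenate the pair into a single permutation in $\Sym(D,\comp{D})$, and identify $\sgn(\tau)$ with $(-1)^{\phi}$. Your treatment of fixed points and of the uniqueness of the triple $(I,\sigma,\tau)$ attached to each permutation is more explicit than the paper's, but it is the same argument.
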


\begin{proof}
By  Theorem~\ref{thm:main}(c), 
\[
\U_D = \sum_{I \subseteq [n]} \sum_{(\sigma,\tau) \in \mathfrak{S}_{I^c} \times \mathfrak{S}_I} p_{\mathrm{cyc}(\sigma)} p_{\mathrm{cyc}(\tau)} \sgn(\tau) \prod_{i \in I^c} \overline{A}_{i,\sigma(i)} \prod_{i \in I} A_{i,\tau(i)}.
\]
The non-zero summands in this expression come from pairs $(\sigma,\tau) \in\mathfrak{S}_{I^c} \times \mathfrak{S}_I$ for which every cycle in $\sigma$ is a $\comp{D}$-cycle and every cycle in $\tau$ is a $D$-cycle. The contribution to the sum from such a pair is $p_{\mathrm{cyc}(\sigma)} p_{\mathrm{cyc}(\tau)} \sgn(\tau) = p_{\mathrm{cyc}(\rho)}\sgn(\tau)$ where $\rho$ is the permutation in $\mathfrak{S}_n$ obtained by concatenating $\sigma$ and $\tau$. Finally, $\sgn(\tau)$ is precisely $(-1)^{\phi(\rho)}$ and the result follows.
\end{proof}

As an example, consider the digraph $D$ in Figure~\ref{fig:pathcycle}. There are exactly three permutations in $\mathfrak{S}(D,\overline{D})$, namely
\[
\sigma=(1)(2)(3) \hspace{0.2in} \sigma'=(12)(3) \hspace{0.2in} \sigma''=(123).
\]
Note that $\sigma$ has one $D$-cycle (of length $1$), whereas $\sigma'$ and $\sigma''$ have no $D$-cycles. Therefore, $\phi(\sigma)=\phi(\sigma')=\phi(\sigma'')=0$. We deduce then that $\U_D=p_{1^3}+p_{21}+p_3$, in agreement with Equation~(\ref{eqn:powersumequation}).

When $D$ is acyclic, $\U_D$ has the following compact form. This is the so-called \emph{symmetric function determinant} of $\overline{A}$ as coined by Stanley in \cite[Exercise 123]{EC2}.
\begin{corollary}\label{cor:uppertriangular}
Let $D$ be an acyclic digraph on $[n]$. Then
\[
\U_D = \sum_{\sigma \in \mathfrak{S}_n} p_{\mathrm{cyc}(\sigma)} \prod_{i=1}^n \overline{A}_{i,\sigma(i)}.
\]
In particular, $\U_D$ is $p$-positive.
\end{corollary}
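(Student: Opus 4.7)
The plan is to read off this corollary directly from Theorem~\ref{thm:main}(a), using the fact that acyclicity of $D$ forces the $\det E_{\vec{z}}(XA)$ factor to collapse to $1$. Since $D$ is acyclic, we may topologically order its vertices so that $A$ is strictly upper triangular (as already used in the proof of Lemma~\ref{lem:walks}). Then $XA$ is strictly upper triangular, so each factor $I + z_i XA$ in the formal product $E_{\vec{z}}(XA) = \prod_{i \geq 1}(I + z_i XA)$ is upper triangular with $1$'s on the diagonal, and hence so is the product. This gives $\det E_{\vec{z}}(XA) = 1$. (Alternatively, one could invoke Jacobi's identity \eqref{eqn:Jacobi} together with the vanishing of $\tr(XA)^k$ for $k \geq 1$, which amounts to the same observation.)

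With that factor gone, Theorem~\ref{thm:main}(a) reduces to $\U_D = \mathfrak{L}_n \det H_{\vec{z}}(X\comp{A})$. Applying Lemma~\ref{lem:coefficientextraction}(a) with $I = [n]$ and with matrix $\comp{A}$ in place of $A$ then produces precisely
\[
\U_D = \sum_{\sigma \in \Sym_n} p_{\cyc(\sigma)} \prod_{i=1}^{n} \comp{A}_{i,\sigma(i)},
\]
which is the claimed identity.

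For the $p$-positivity statement, note that each product $\prod_{i=1}^n \comp{A}_{i,\sigma(i)}$ is either $0$ or $1$, so the coefficient of $p_{\cyc(\sigma)}$ attached to any individual $\sigma$ is a nonnegative integer. Grouping terms by cycle type yields a nonnegative integer coefficient on each $p_\mu$, establishing $p$-positivity. There is no real obstacle here; once one recognizes that acyclicity kills the elementary-symmetric factor, everything else is an immediate application of the lemmas already set up. The only item requiring a moment's care is the justification that $\det E_{\vec{z}}(XA) = 1$, but the strictly upper triangular reduction handles this cleanly.

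One could alternatively deduce the same statement from Corollary~\ref{cor:U_Dpowersum} by observing that for acyclic $D$ we have $\Sym(D, \comp{D}) = \Sym(\comp{D})$ (there are no nontrivial $D$-cycles) and $\phi \equiv 0$ (since every $D$-cycle that could contribute has length $1$, contributing $\ell(\gamma) - 1 = 0$); then repackage the sum over $\Sym(\comp{D})$ as a sum over all of $\Sym_n$ weighted by $\prod_i \comp{A}_{i,\sigma(i)}$, using that $\comp{A}_{i,i} = 1$ for all $i$ because $D$ has no loops. I'd opt for the first route, since it showcases Theorem~\ref{thm:main}(a) without routing through the earlier corollary.
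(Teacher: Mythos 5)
Your proof is correct and is essentially the paper's argument: the paper applies Theorem~\ref{thm:main}(c) and notes that acyclicity kills every summand with $I \neq \emptyset$, which is the same observation as your $\det E_{\vec{z}}(XA)=1$ (part (c) is just part (a) expanded via Lemma~\ref{lem:coefficientextraction}). Your strictly-upper-triangular justification and the alternative route through Corollary~\ref{cor:U_Dpowersum} are both valid but add nothing beyond the one-line vanishing argument the paper uses.
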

\begin{proof}
If $D$ is acyclic then in Theorem~\ref{thm:main}(c) we have $p_{\mathrm{cyc}(\tau)} \sgn(\tau) \prod_{i \in I} A_{i,\tau(i)}=0$ for $I \neq \emptyset$. This leaves the desired sum.
\end{proof}

\begin{example}\label{ex:star}
Let $\bigcup_{i=1}^k V_i$ be a partition of $[n]$ and consider the digraph $D$ on $[n]$ with edge set $\bigcup_{i > j} V_i \times V_j$. Then $D$ is acyclic, so by Corollary~\ref{cor:uppertriangular} we have
\[
\U_D = \sum_{\sigma \in \mathfrak{S}_n} p_{\cyc(\sigma)} \prod_{i=1}^n \comp{A}_{i,\sigma(i)} = \sum_{(\sigma_1,\ldots,\sigma_k) \in \mathfrak{S}_{V_1} \times \cdots \times \mathfrak{S}_{V_k}} p_{\cyc(\sigma_1)} \cdots p_{\cyc(\sigma_k)}
\]
because for any $\sigma \in \mathfrak{S}_n$ the product $\prod_{i=1}^n \comp{A}_{i,\sigma(i)}$ is nonzero if and only if each cycle in $\sigma$ is individually contained completely in one of the sets $V_i$.  

Observe in this case we can write $\U_D$ very simply in terms of complete symmetric functions. Using the well-known identity $m! h_m = \sum_{\sigma \in \Sym_m} p_{\cyc(\sigma)}$, we find that
\[\U_D =\prod_{i=1}^k \sum_{\sigma \in \mathfrak{S}_{V_i}} p_{\cyc(\sigma)}
= (\lambda_1! \lambda_2! \cdots) h_{\lambda},
\]
where $\lambda$ is the partition of $n$ obtained from rearranging $|V_1|,\ldots,|V_k|$ in nonincreasing order. 
\end{example}

If the vertices of an acyclic digraph are labeled in topological order (meaning they are listed so that if $(u,v)$ is a directed edge then $u$ is listed before $v$) then we recover, from Corollary~\ref{cor:uppertriangular}, 
 the following expansion of $\U_D$ that was conjectured by Stanley and proved by Gessel; see \cite[Exercise 120c]{EC2}.  After some unpacking, our approach is effectively equivalent to Gessel's. 

Recall that a \emph{record}  of a permutation $\sigma \in \Sym_n$ is an index $r \in [n]$ such that $\sigma(r) > \sigma(i)$ for all $i < r$.  
If $r_1 < \cdots < r_j$ are the records of $\sigma$, then the \emph{record partition} of $\sigma$ is the partition of $n$ whose parts are the differences $\{r_2-r_1, r_3-r_2,\ldots, (n+1)-r_j\}$.  For example, $\sigma=325641 \in \Sym_6$ has records $\{1,3,4\}$ and record partition $(3,2,1)$.

\begin{corollary}
\label{cor:acyclicppositive}
Suppose $D$ is a digraph on $[n]$ such that $i > j$ for every directed edge  $(i,j)$. Then
\[
\U_D = \sum_{\substack{\sigma \in \mathfrak{S}_n \\ \Des_D(\sigma)=\emptyset}} p_{\mathrm{rp}(\sigma)}
\]
where $\mathrm{rp}(\sigma)$ is the record partition of $\sigma$.
\end{corollary}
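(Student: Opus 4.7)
The plan is to derive this directly from Corollary~\ref{cor:uppertriangular}, which, since the hypothesis forces $D$ to be acyclic, yields
\[
\U_D = \sum_{\tau \in \Sym_n} p_{\cyc(\tau)} \prod_{i=1}^n \comp{A}_{i,\tau(i)}.
\]
The indicator $\prod_i \comp{A}_{i,\tau(i)}$ equals $1$ precisely when no pair $(i,\tau(i))$ is a directed edge of $D$. To obtain the claimed form, I would invoke Foata's fundamental transition bijection between one-line and cycle notations. Given $\sigma \in \Sym_n$ with records at positions $r_1 < r_2 < \cdots < r_m$ (and $r_{m+1} := n+1$), partition $\sigma_1 \cdots \sigma_n$ into consecutive blocks $B_s = (\sigma_{r_s}, \sigma_{r_s+1},\ldots, \sigma_{r_{s+1}-1})$ and let $\tau \in \Sym_n$ be the permutation whose disjoint cycles are exactly these blocks read as cyclic sequences. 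This correspondence is a classical bijection, and by construction the cycle type of $\tau$ is the record partition $\mathrm{rp}(\sigma)$.

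I would then verify that, under this bijection, the condition $\prod_i \comp{A}_{i,\tau(i)} = 1$ matches exactly $\Des_D(\sigma)=\emptyset$. The pairs $(i,\tau(i))$ split as ``within-block'' pairs $(\sigma_j,\sigma_{j+1})$ with $j+1$ not a record, together with ``block-closing'' pairs $(\sigma_{r_{s+1}-1}, \sigma_{r_s})$. A key observation is that the leading element $\sigma_{r_s}$ of each block is the maximum of that block: if some later entry $\sigma_j$ (with $r_s < j < r_{s+1}$) exceeded $\sigma_{r_s}$, then $\sigma_j$ would exceed every earlier entry of $\sigma$ and hence be a record, contradicting $j < r_{s+1}$. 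Given this, the hypothesis that every edge $(i,j)\in E(D)$ satisfies $i>j$ makes the block-closing pairs automatic non-edges of $D$: singleton blocks yield self-loops, which $D$ forbids, while larger blocks close with $\sigma_{r_{s+1}-1} < \sigma_{r_s}$, the wrong orientation.

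Symmetrically, among the adjacent pairs $(\sigma_j,\sigma_{j+1})$ of $\sigma$, those with $j+1$ a record satisfy $\sigma_j < \sigma_{j+1}$ and so cannot lie in $E(D)$. Thus both conditions reduce to the same requirement, namely that no within-block pair is a directed edge of $D$, and the bijection restricts to give the identity of sums asserted by the corollary. The main obstacle, and the one I would take most care to verify, is the block-maximum observation; without it, the block-closing orientations are not forced, and the equivalence of the two side conditions collapses. Everything else is bookkeeping in the transition bijection.
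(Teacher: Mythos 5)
Your proposal is correct and follows essentially the same route as the paper: both start from Corollary~\ref{cor:uppertriangular} and then apply Foata's fundamental bijection, matching permutations whose cycles (each led by its maximum, concatenated in increasing order of maxima) are all $\comp{D}$-cycles with one-line permutations having no $D$-descent, so that cycle type corresponds to record partition. The only difference is that you run the bijection from one-line notation to cycles rather than the reverse, and you verify the equivalence of the two side conditions in both directions (the paper only spells out one), which is a mild but welcome extra bit of care.
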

\begin{proof}
Evidently $D$ is acyclic, so Corollary~\ref{cor:uppertriangular} gives $\U_D = \sum_{\sigma \in \mathfrak{S}_n} p_{\cyc(\sigma)} \prod_{i=1}^n \overline{A}_{i,\sigma(i)}$. The only nonvanishing summands are $p_{\cyc(\sigma)}$ for permutations $\sigma$ whose cycles are all $\comp{D}$-cycles. 
Now transform any such $\sigma$ into the following linear form $\sigma'$: Write each cycle of $\sigma$  with largest element first, and then concatenate the cycles in ascending order of greatest element. For example, $\sigma=(1\,4\,6)(2\,3)(5)$ becomes $\sigma'=325641$. (The mapping $\sigma \to \sigma'$ is a restriction of  Foata's fundamental bijection.)  Then  $\mathrm{cyc}(\sigma)=\mathrm{rp}(\sigma')$, and each consecutive pair $(\sigma'_j,\sigma'_{j+1})$ is either of the form $(i,\sigma(i))$ for some $i$ or satisfies $\sigma'_{j} < \sigma'_{j+1}$.  In particular, each $(\sigma'_j,\sigma'_{j+1})$ is a directed edge of $\comp{D}$, so $\Des_D(\sigma')=\emptyset$.\end{proof}


Finally, we have the following result of Grinberg and Stanley which ensures $p$-positivity of $\U_D$ when $D$ has no $2$-cycles.

\begin{corollary}\cite[Theorem 1.41]{GrinbergStanley}
Suppose $D$ is a digraph on $[n]$ and has no $2$-cycles. Then $\U_D$ is $p$-positive.    
\end{corollary}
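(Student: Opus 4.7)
The plan is to start from Corollary~\ref{cor:U_Dpowersum}, which gives
$$\U_D = \sum_{\sigma \in \mathfrak{S}(D, \comp{D})} (-1)^{\phi(\sigma)} p_{\cyc(\sigma)},$$
and show that the coefficient of $p_\lambda$ in $\U_D$ is non-negative for every partition $\lambda \vdash n$. I would group the permutations $\sigma$ of cycle type $\lambda$ according to the set partition $\pi$ of $[n]$ induced by the cycles of $\sigma$. For any such $\pi$ with block sizes matching the parts of $\lambda$, the contribution to $[p_\lambda]\U_D$ factors as a product over the blocks $S$ of $\pi$, each factor being the signed sum over cyclic orderings of $S$ that are $D$- or $\comp{D}$-cycles (with sign $(-1)^{\ell-1}$ for a $D$-cycle of length $\ell$ and sign $+1$ for a $\comp{D}$-cycle). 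It therefore suffices to show that each block contribution is non-negative.

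For a singleton block the only cyclic ordering is trivial and contributes $+1$. For a block of size two the hypothesis that $D$ has no 2-cycles eliminates $D$-cycles entirely, leaving only a possible $\comp{D}$-cycle with sign $+1$, for a contribution of $0$ or $+1$. For a block $S$ with $|S|=k\geq 3$, writing $\mathcal{C}_S^D$ and $\mathcal{C}_S^{\comp{D}}$ for the sets of $D$- and $\comp{D}$-cycles on $S$, the block contribution is
$$(-1)^{k-1}\bigl|\mathcal{C}_S^D\bigr| + \bigl|\mathcal{C}_S^{\comp{D}}\bigr|,$$
which is manifestly non-negative when $k$ is odd.

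The main obstacle is the even case $k \geq 4$, where non-negativity reduces to the inequality $\bigl|\mathcal{C}_S^{\comp{D}}\bigr| \geq \bigl|\mathcal{C}_S^D\bigr|$. My key tool will be a reversal injection $\gamma \mapsto \gamma^{op}$, sending $(i_0\,i_1\,\cdots\,i_{k-1}) \in \mathcal{C}_S^D$ to $(i_0\,i_{k-1}\,\cdots\,i_1)$. The no-2-cycle hypothesis is used precisely here: each edge $(i_j, i_{j+1}) \in D$ with $i_j \neq i_{j+1}$ forces $(i_{j+1}, i_j) \notin D$ and hence $(i_{j+1}, i_j) \in \comp{D}$, so $\gamma^{op} \in \mathcal{C}_S^{\comp{D}}$; injectivity is immediate since reversal is self-inverse. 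Assembling these block estimates, every set-partition contribution is non-negative, so $[p_\lambda]\U_D \geq 0$ for every $\lambda$, establishing $p$-positivity.
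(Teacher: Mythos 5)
Your proof is correct and rests on the same key idea as the paper's: the no-$2$-cycle hypothesis makes cycle reversal an injection from $D$-cycles to $\comp{D}$-cycles, which eliminates the potentially negative even-length contributions. The paper packages this at the level of Theorem~\ref{thm:main}(b), showing that $\tr(X\comp{A})^k+(-1)^{k-1}\tr(XA)^k$ has nonnegative coefficients and letting the exponential perform the block-by-block bookkeeping that you carry out explicitly via set partitions.
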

\begin{proof}
Recalling that, working in the ring where $X^2=0$ (which we can do by Theorem~\ref{thm:main}), the quantities $\tr(X\comp{A})^k$ and $\tr(XA)^k$ enumerate the $k$-cycles in $\comp{D}$ and $D$, respectively (both with multiplicity $k$).  Suppose $D$ has no 2-cycles.  If arc $(u,v)$ appears in $D$ then the reverse arc $(v,u)$ appears in $\comp{D}$. Thus  reversing any $k$-cycle in $D$ yields a $k$-cycle in $\comp{D}$. In particular, $\tr(X\comp{A})^k$ contains all the terms of $\tr(XA)^k$, so all coefficients of $\tr(X\comp{A})^k +(-1)^{k-1} \tr(XA)^k$ are nonnegative.  Hence $\U_D$ is $p$-positive by Theorem~\ref{thm:main}(b).
\end{proof} 

\subsection{Tournaments} 
Grinberg and Stanley \cite{GrinbergStanley}  expand $\U_D$ when $D$ is a tournament, showing not only that $\U_D$ is $p$-positive in this case, but that $\U_D \in \mathbb{Z}[p_1,2p_3,2p_5,2p_7,\dots]$. We can recover this theorem directly from  Theorem~\ref{thm:main}.

\begin{corollary}\label{cor:tournament}\cite[Theorem 1.39]{GrinbergStanley}
Let $D$ be a tournament on $[n]$ and for $\sigma \in \mathfrak{S}_n$ let $\psi(\sigma)$ be the number of nontrivial cycles of $\sigma$. Then
\[
\U_D = \sum_{\substack{\sigma \in \mathfrak{S}(D) \\ \text{all cycles of } \sigma \text{ have odd length }}} 2^{\psi(\sigma)} p_{\cyc(\sigma)}
\]
\end{corollary}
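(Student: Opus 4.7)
The plan is to derive the corollary directly from Theorem~\ref{thm:main}(b). Since $\mathfrak{L}_n$ extracts the coefficient of the squarefree monomial $x_1x_2\cdots x_n$, I may work throughout modulo the ideal $\langle x_1^2,\ldots,x_n^2\rangle$; in matrix language this amounts to setting $X^2=0$, a reduction that commutes with $\exp$ because the relevant quotient map is a ring homomorphism.

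The crucial algebraic input is the tournament identity $\comp{A}=A^T+I$, already exploited in the proof of Theorem~\ref{thm:Redei}. From it I would first establish the trace identities
\[
\tr(X\comp{A}) \equiv x_1+\cdots+x_n, \qquad \tr((X\comp{A})^k) \equiv \tr((XA)^k) \pmod{X^2} \quad (k\geq 2).
\]
The $k=1$ case uses only that $A$ has zero diagonal in a tournament. For $k\geq 2$, expanding $(X(A^T+I))^k$ produces a sum indexed by binary strings (each position is either $XA^T$ or $X$); any string containing at least one $X$ factor from the $I$ summand places two $X$'s adjacent somewhere in the cyclic product and vanishes modulo $X^2$. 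The surviving all-$A^T$ term is $\tr((XA^T)^k)$, which equals $\tr((XA)^k)$ by transposition together with cyclicity of the trace. Consequently the exponent in Theorem~\ref{thm:main}(b) collapses modulo $X^2$ to
\[
p_1 (x_1+\cdots+x_n) \;+\; \sum_{\substack{k \geq 3 \\ k \text{ odd}}} 2\,p_k\,\Gamma_k(D),
\]
where $\Gamma_k(D) := \tfrac{1}{k}\tr((XA)^k) \equiv \sum_\gamma\prod_{v\in\gamma}x_v \pmod{X^2}$ enumerates directed $k$-cycles $\gamma$ of $D$, as computed in the proof of Lemma~\ref{lem:coefficientextraction}. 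The parity condition appearing in the corollary is now transparent: for even $k\geq 2$ the coefficient $1+(-1)^{k-1}$ vanishes, so only odd cycle lengths contribute.

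The remainder is standard exponential-formula bookkeeping. Write the displayed exponent as $f=\sum_\alpha w_\alpha$, where an ``atom'' $\alpha$ is either a single vertex $i$ (with weight $w_i=p_1x_i$) or a directed $D$-cycle $\gamma$ of odd length at least $3$ (with weight $w_\gamma=2p_{|\gamma|}\prod_{v\in\gamma}x_v$). Expanding $\exp(f)=\sum_j f^j/j!$ and applying $\mathfrak{L}_n$, a nonzero contribution arises only when the $j$ chosen atoms are pairwise vertex-disjoint and jointly cover $[n]$; the factor $1/j!$ is exactly canceled by the $j!$ orderings of these distinct atoms. Such an unordered collection is precisely the cycle decomposition of some $\sigma\in\mathfrak{S}(D)$ whose cycles all have odd length, and the product of the corresponding weights is $2^{\psi(\sigma)}p_{\cyc(\sigma)}$. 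Summing yields the claimed formula.

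The main obstacle I anticipate is a careful verification of the trace identity $\tr((X\comp{A})^k)\equiv\tr((XA)^k)\pmod{X^2}$ for $k\geq 2$ and the justification that the $X^2=0$ reduction is legitimate inside $\exp$; once these two points are settled, the remainder is formal.
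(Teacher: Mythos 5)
Your proposal is correct and follows essentially the same route as the paper: both start from Theorem~\ref{thm:main}(b), reduce modulo $\langle x_1^2,\ldots,x_n^2\rangle$, show that $\tr(X\comp{A})=\tr X$ while $\tr(X\comp{A})^k=\tr(XA)^k$ for $k\geq 2$ (so only odd cycle lengths survive, with a factor of $2$), and then extract the coefficient of $x_1\cdots x_n$ from the exponential. The only cosmetic differences are that you prove the trace identity algebraically from $\comp{A}=A^T+I$ where the paper argues via cycle reversal and $D^{\mathrm{op}}$, and you carry out the exponential-formula bookkeeping by hand where the paper delegates it to Lemma~\ref{lem:coefficientextraction}; both variants are sound.
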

\begin{proof}
Recall that Theorem~\ref{thm:main}(b) gives
\[
\U_D = \mathfrak{L}_n\, \exp \Big( \sum_k \frac{1}{k}p_k\big(\tr(X\comp{A})^k + (-1)^{k-1}  \tr(XA)^k\big)\Big).
\]
We again proceed in the quotient ring $\Lambda_{\vec{z}}[[x_1,\ldots,x_n]]/\langle x_1^2, \ldots, x_n^2 \rangle$, \goaway{Here we have $X^2=0$ and thus
$$
    (AX+X)^k= (AX)^k + X(AX)^{k-1}.
$$}
in which we claim
$$
    \tr (X\comp{A})^k =
    \begin{cases}
        \tr X  &\text{if $k=1$}  \\
        \tr (XA)^k &\text{otherwise}.
    \end{cases}
$$
Indeed,  $\comp{A}$ has all ones on the diagonal because $D$ has no loops, so $\tr(X\comp{A})=\tr X$. Since $D$ is a tournament, $D^{op}$ is simply $\comp{D}$ with its loops removed. Thus any $D$-cycle is the reversal of a non-loop $\comp{D}$-cycle, and vice-versa.  When $k > 1$, loops contribute to neither $\tr(X\comp{A})^k$ nor $\tr(XA)^k$ (since $X^2=0$), and so in this case we have $\tr(X\comp{A})^k=\tr(XA)^k$.

\goaway{Indeed for $k=1$ we have $\tr AX = 0$ since tournaments have no loops, whereas when $k>1$ we can apply the general identity $\tr(PQ)=\tr(QP)$ to get
\[ \tr(AX+X)^k=\tr(XA)^k+\underbrace{\tr((AX)^{k-1}X)}_{0}.
 \]}
We are left with
\begin{align*}
\U_D 
 &= \mathfrak{L}_n \exp\Big(p_1 \tr X \Big) \exp \left( \sum_{\text{odd $k > 1$}} \frac{1}{k}(2p_k) \tr(XA)^k\right) \\
 &= \mathfrak{L}_n \prod_i (1+p_1x_i) \cdot  \det H_{\vec{z}}(XA) \Big|_{p_1 \rightarrow 0, \, p_{2i} \rightarrow 0,  \, p_{2i+1}\rightarrow 2p_{2i+1}} \\
 &= \sum_{I \subseteq [n]} p_1^{n-|I|}\cdot \mathfrak{L}_I  \det H_{\vec{z}}(XA) \Big|_{p_1 \rightarrow 0, \, p_{2i} \rightarrow 0,  \, p_{2i+1}\rightarrow 2p_{2i+1}}  
\end{align*}
But by Lemma~\ref{lem:coefficientextraction}, for $I \subseteq [n]$ we have
$$
    \mathfrak{L}_I\, \det H_{\vec{z}}(XA) = \sum_{\sigma \in \mathfrak{S}_I} p_{\mathrm{cyc}(\sigma)} \prod_{i \in I} A_{i,\sigma(i)}
    = \sum_{\substack{\sigma \in \mathfrak{S}_I \\ \text{ every cycle in $\sigma$  is a  $D$-cycle}}}
    p_{\mathrm{cyc}(\sigma)}
$$    
and the result follows.
\end{proof}

It was observed in \cite{GrinbergStanley} that  Corollary~\ref{cor:tournament}  specifies to the following  expression for the number of Hamiltonian paths in the complement of a tournament:

\begin{corollary}\label{cor:hampaths}\cite[Theorem 1.39 \& Lemma 6.5]{GrinbergStanley} For a tournament $D$ on $[n]$,
\[
\mathrm{ham}(\overline{D})=\sum_{\substack{\sigma \in \mathfrak{S}(D) \\ \text{all cycles of } \sigma \text{ have odd length } }} 2^{\psi(\sigma)}.
\]
\end{corollary}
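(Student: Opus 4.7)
The plan is to deduce this directly from Corollary~\ref{cor:tournament} by applying a principal specialization that collapses $\U_D$ to $\mathrm{ham}(\overline{D})$. The key observation is that a permutation $\pi \in \Sym_n$ has $\Des_D(\pi) = \emptyset$ precisely when every consecutive pair $(\pi_i, \pi_{i+1})$ is a non-edge of $D$, i.e., is an edge of $\comp{D}$; hence the number of $\pi$ with empty $D$-descent set equals $\mathrm{ham}(\comp{D})$.

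Next, I would specialize all variables by setting $z_1 = 1$ and $z_i = 0$ for $i > 1$. Inspecting the defining sum of $F_I$, the only monomial that survives is the one with $i_1 = \cdots = i_n = 1$, which contains no strict inequalities and so requires $I = \emptyset$. Consequently $F_\emptyset \mapsto 1$ while $F_I \mapsto 0$ for every nonempty $I$, and therefore
\[
\U_D(1,0,0,\ldots) = \sum_{\pi \in \Sym_n} F_{\Des_D(\pi)}(1,0,0,\ldots) = |\{\pi \in \Sym_n : \Des_D(\pi) = \emptyset\}| = \mathrm{ham}(\comp{D}).
\]

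Finally, I would apply the same specialization to the right-hand side of Corollary~\ref{cor:tournament}. Since $p_k = z_1^k + z_2^k + \cdots$ evaluates to $1$ under this substitution, we have $p_{\cyc(\sigma)} = 1$ for every $\sigma$, and so
\[
\mathrm{ham}(\comp{D}) = \U_D(1,0,0,\ldots) = \sum_{\substack{\sigma \in \mathfrak{S}(D) \\ \text{all cycles of } \sigma \text{ have odd length}}} 2^{\psi(\sigma)},
\]
as required. There is no genuine obstacle here: once the correct specialization is identified, each step is a one-line verification. The only subtlety worth double-checking is the claim that $F_I(1,0,0,\ldots)$ vanishes for $I \neq \emptyset$, which amounts to noting that any strict inequality $i_j < i_{j+1}$ is incompatible with forcing every $i_\ell$ to equal $1$.
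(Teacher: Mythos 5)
Your proof is correct and follows essentially the same route as the paper: both specialize Corollary~\ref{cor:tournament} at $z_1=1$, $z_i=0$ for $i>1$, under which $p_k\mapsto 1$. The only difference is that you justify $\U_D(1,0,0,\ldots)=\mathrm{ham}(\comp{D})$ directly from the fundamental quasisymmetric expansion~\eqref{eq:U_D} (noting that $\Des_D(\pi)=\emptyset$ exactly when $\pi$ is a Hamiltonian path of $\comp{D}$), whereas the paper reads it off from the walk generating function identity~\eqref{eqn:UD_WD}; both justifications are one-line and equally valid.
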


\begin{proof}
From~\eqref{eqn:UD_WD} we see that setting $z_1=1$ and $z_i=0$ for $i > 1$ transforms $\U_D$ into $\mathfrak{L}_n W_{\comp{D}}(1)$, which is precisely $\mathrm{ham}(\comp{D})$.  Evidently these transformations are equivalent to mapping $p_i \rightarrow 1$ for all $i$, and the result follows from Corollary~\ref{cor:tournament}.
\end{proof}

\subsection{Schur Function Expansions}\label{sec:schur}

We now explore the expansion of $\U_D$ in the Schur function basis. A motivating question is determining conditions on $D$ that ensure $\U_D$ is Schur positive. For instance, we see that the digraphs in Example~\ref{ex:star} are all Schur positive. Indeed for such a digraph $D$, if the partition of the vertex set is $V_1,V_2,\ldots,V_k$ with $|V_i|=\lambda_i$, and we set $\lambda$ to be the partition of $n$ with the $\lambda_i$'s listed in weakly decreasing order, then $\U_D = \left(\prod_{i=1}^k \lambda_i! \right) \sum_{\mu} K_{\mu,\lambda} s_{\mu} $ where $K_{\mu,\lambda}$ are the Kostka numbers which are known to be nonnegative.
However, $\U_D$ may fail to be Schur positive even if $D$ is acyclic. For example, consider the digraph $D$ on vertices $\{1,2,3,4\}$ with directed edges $\{(4,3), (3,2), (3,1)\}$.  Then $D$ is acyclic (in fact, a directed tree), and one can calculate that
\begin{equation*}
    \U_D = 10s_{(4)}+4s_{(3,1)} - 2s_{(2,2)}+2s_{(2,1,1)}.
\end{equation*}

We begin our  investigation by developing two general expressions concerning the resolution of $\U_D$ in the Schur basis.  The first of these identifies the coefficient of $s_{\lambda}$ in $\U_D$ as a Jacobi-Trudi determinant. 

For any digraph $D$ on $[n]$,  and for any $k \geq 0$, let 
\[
\xi_{k+1}(D):=\sum_{\substack{(i_0,i_1, \ldots, i_{k})\\ \text{a path in }D}} x_{i_0}x_{i_1} \cdots x_{i_{k}} \in  \mathbb{Q}[x_1,\ldots,x_n]
\]
be the generating polynomial for paths of length $k$ in $D$. Further set $\xi_0(D)=1$ and  take $\xi_k(D)=0$ for $k < 0$.  Note that $\xi_{k+1}(D)$ is obtained from $\gamma_{k+1}(D)$ by setting $x_i^2=0$ for all $i$. Finally, for $\lambda =(\lambda_1,\lambda_2,\ldots) \vdash n$,  define  $P_{\lambda}(D)$ to be the $n \times n$ Jacobi-Trudi matrix whose $(i,j)$-th entry is $\xi_{\lambda_i-i+j}({D})$.  

\begin{proposition} \label{Schurexpansion}
Let $D$ be any digraph on $[n]$. Then
$$
[s_{\lambda}]\, \U_D = 
\mathfrak{L}_n \det P_{\lambda}(\comp{D}) = \mathfrak{L}_n \det P_{\lambda^T}(D).
$$
\end{proposition}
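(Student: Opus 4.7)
The plan is to recast $\U_D$ as a coefficient extraction from a product of univariate \emph{path generating functions} in the variables $\vec{z}$, and then invoke a ring-homomorphic image of the Cauchy identity to read off the Schur coefficients. Concretely, set $T_D(z):=\sum_{k\geq 0}\xi_k(D)z^k$. Expanding $\prod_{i\geq 1} T_{\comp{D}}(z_i)$ in the ring $\mathbb{Q}[x_1,\ldots,x_n][[z_1,z_2,\ldots]]$ and extracting the coefficient of $x_1x_2\cdots x_n$ selects tuples $(P_1,P_2,\ldots)$ of vertex-disjoint paths in $\comp{D}$ that together cover $[n]$, contributing $\prod_i z_i^{|P_i|}$. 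By Proposition~\ref{prop:U_D_path}, this sum is exactly $\U_D$, so
\[
\U_D \;=\; \mathfrak{L}_n \prod_{i\geq 1} T_{\comp{D}}(z_i).
\]

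For the Schur expansion I would use the classical consequence of Cauchy's identity that for any commutative ring $R$ and any sequence $a_0=1, a_1, a_2, \ldots \in R$ with $a_k=0$ for $k<0$ and $A(z)=\sum_k a_k z^k$,
\[
\prod_{i\geq 1} A(z_i) \;=\; \sum_\lambda \det[a_{\lambda_i - i + j}]_{i,j=1}^{N}\cdot s_\lambda(\vec{z})
\]
for any $N\geq \ell(\lambda)$. The quickest route is to apply the ring homomorphism $\phi:\Lambda_{\vec{y}}\to R$ defined by $\phi(h_k(\vec{y}))=a_k$ to the Cauchy identity $\prod_i H_{\vec{y}}(z_i) = \sum_\lambda s_\lambda(\vec{y})\,s_\lambda(\vec{z})$; the left side becomes $\prod_i A(z_i)$, while the Jacobi--Trudi identity~\eqref{eqn:jacobitrudi} gives $\phi(s_\lambda)=\det[a_{\lambda_i-i+j}]$. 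Specialising to $R=\mathbb{Q}[x_1,\ldots,x_n]$ and $a_k=\xi_k(\comp{D})$ yields
\[
\prod_{i\geq 1} T_{\comp{D}}(z_i) \;=\; \sum_\lambda \det P_\lambda(\comp{D})\cdot s_\lambda(\vec{z}),
\]
where the $n\times n$ Jacobi--Trudi matrix may be used in place of the $\ell(\lambda)\times\ell(\lambda)$ one because the trailing block $[\xi_{j-i}(\comp{D})]_{i,j>\ell(\lambda)}$ is upper-triangular with unit diagonal (using $\xi_0(\comp{D})=1$, $\xi_{<0}(\comp{D})=0$). Applying $\mathfrak{L}_n$ and combining with the first display then gives $[s_\lambda]\U_D=\mathfrak{L}_n \det P_\lambda(\comp{D})$.

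For the second equality I would invoke Proposition~\ref{prop:involution} together with $\omega(s_\lambda)=s_{\lambda^T}$: these imply $[s_\lambda]\U_D = [s_{\lambda^T}]\U_{\comp{D}}$, and applying the case just proved to the digraph $\comp{D}$ (so that $\comp{\comp{D}}=D$) converts the right side into $\mathfrak{L}_n \det P_{\lambda^T}(D)$. The only genuinely delicate piece is the generalised Cauchy/Jacobi--Trudi identity in the middle paragraph; once phrased as an image of Cauchy under $\phi$ it is routine, but checking that the determinant may be taken at the ambient size $n$ rather than $\ell(\lambda)$ is the subtle bookkeeping that must be handled carefully.
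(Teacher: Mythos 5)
Your proof is correct and follows essentially the same route as the paper: both rewrite $\U_D$ as $\mathfrak{L}_n$ applied to a product of path generating functions in the $z_i$ and then invoke the Cauchy identity under the specialization $h_k(\vec{y})\mapsto\xi_k(\comp{D})$ together with Jacobi--Trudi (your explicit check that the $n\times n$ determinant agrees with the $\ell(\lambda)\times\ell(\lambda)$ one is a detail the paper leaves implicit). The only divergence is in the second equality, where the paper uses $W_{\comp{D}}(z)=(W_D(-z))^{-1}$ to turn the specialization into $e_k(\vec{y})\mapsto\xi_k(D)$ and applies the dual Jacobi--Trudi identity, whereas you apply $\omega$ via Proposition~\ref{prop:involution} together with $\omega(s_\lambda)=s_{\lambda^T}$ and then cite the first equality for $\comp{D}$; both are valid one-line arguments.
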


\begin{proof}
We have
\begin{align*}
\U_D = \mathfrak{L}_n \prod_{i \geq 1} W_{\comp{D}}(z_i) 
= \mathfrak{L}_n \prod_{i \geq 1} \Big( \sum_{k \geq 0} \xi_k(\comp{D}) z_i^k \Big) 
= \mathfrak{L}_n \sum_{\lambda \vdash n} h_\lambda(\vec{y}) m_{\lambda}
(\vec{z})
\Bigg|_{h_i(\vec{y}) \rightarrow \xi_i(\comp{D})},
\end{align*}
with the the last equality holding since $\prod_{i \geq 1} \sum_{k \geq 0} h_k(\vec{y})z_i^k=\sum_{\lambda \vdash n} h_{\lambda}(\vec{y})m_{\lambda}(\vec{z})$. The Cauchy identity~\eqref{eqn:cauchy} then gives
\begin{align*}
\U_D = 
\mathfrak{L}_n
\sum_{\lambda \vdash n} {s}_{\lambda}(\vec{y})s_{\lambda}(\vec{z}) \Big|_{h_i(\vec{y}) \rightarrow \xi_i({\comp{D}})},
\end{align*}
and the first expression for $[s_\lambda] \ \U_D$ follows by the Jacobi-Trudi formula~\eqref{eqn:jacobitrudi}.  Since $W_{\comp{D}}(z)=(W_D(-z))^{-1}$, 
 the identification $h_i(\vec{y}) \rightarrow \xi_i(\comp{D})$ is equivalent to
$e_i(\vec{y}) \rightarrow \xi_i(D)$, so the second expression  follows similarly.
\end{proof}

For example, returning to the digraph $D$ in Figure~\ref{fig:pathcycle} we have
\begin{align*}
    \xi_1(D) = x_1+x_2+x_3,
    \qquad
    \xi_2(D) = x_1 x_3 + x_3 x_2, 
    \qquad
    \xi_3(D) = x_1 x_3 x_2,
\end{align*}
with $\xi_0(D)=1$ and $\xi_k(D)=0$ for $k > 3$.  Proposition~\ref{Schurexpansion} then gives
\begin{align*}
\U_D
&=
\mathfrak{L}_3
 \overbrace{
 \left|\begin{matrix} \xi_3 & 0 & 0 \\ 0 & 1 & \xi_1 \\ 0 & 0 & 1\end{matrix} \right|
 }^{\det P_{(3)}}\cdot s_{(1^3)}
+
\mathfrak{L}_3
\overbrace{
\left| \begin{matrix} \xi_2 & \xi_3 & 0 \\ 1 & \xi_1 & \xi_2 \\ 0 & 0 & 1\end{matrix} \right| 
}^{\det P_{(2,1)}}\cdot s_{(2,1)}
+
\mathfrak{L}_3
\overbrace{
\left| \begin{matrix} \xi_1 & \xi_2 & \xi_3 \\ 1 & \xi_1 & \xi_2 \\ 0 & 1 & \xi_1\end{matrix} \right|
}^{\det P_{(1^3)}} \cdot s_{(3)} \\
&=
s_{(1^3)} + s_{(2,1)} + 3s_{(3)},
\end{align*}
in agreement with~\eqref{eqn:monomialequation} and~\eqref{eqn:powersumequation}.

We can also see using Theorem~\ref{thm:main} that $\U_D$ can be written in the Schur basis with coefficients expressed in terms of immanants of submatrices of $A$ and $\comp{A}$ and Littlewood-Richardson coefficients
$c^{\nu}_{\lambda \mu}$  (see \cite{EC2} for details).

\begin{proposition}\label{prop:UDexpansion}
For any digraph $D$ on $[n]$ we have
   \[\U_D=\sum_{\nu \vdash n} \left(  \sum_{\substack{I \subseteq [n] \\ \lambda \vdash |I^c|, \ \mu \vdash |I|}}  \  \mathrm{Imm}_{\lambda}\overline{A}[I^c] \cdot \mathrm{Imm}_{\mu^T}A[I] \cdot   c^{\nu}_{\lambda \mu}  \right) s_{\nu}\] 
   where $\{c^{\nu}_{\lambda \mu}\}$ are Littlewood-Richardson coefficients defined by
   $s_{\lambda} s_{\mu} = \sum_{\nu \vdash |\lambda|+|\mu|} c^{\nu}_{\lambda \mu} s_{\nu}$.
\end{proposition}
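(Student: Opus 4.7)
The natural starting point is Theorem~\ref{thm:main}(c), which already decomposes $\U_D$ as a sum over subsets $I \subseteq [n]$ of a product of two inner sums indexed by permutations of $I^c$ and $I$. The plan is to convert each of the two power-sum summations into a sum of Schur functions weighted by immanants, then multiply the resulting Schur functions via the Littlewood--Richardson rule and collect coefficients.

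For the first inner sum, I would apply the identity $p_{\cyc(\sigma)} = \sum_{\lambda \vdash |I^c|} \chi^{\lambda}(\sigma)\, s_{\lambda}$ from Equation~(\ref{eqn:powertoschur}), swap the order of summation, and recognize the coefficient of $s_{\lambda}$ as the immanant $\mathrm{Imm}_{\lambda}(\comp{A}[I^c])$ by the definition of the immanant recalled in the preliminaries of Section~\ref{sec:main}. This yields
\[
\sum_{\sigma \in \mathfrak{S}_{I^c}} p_{\cyc(\sigma)} \prod_{i \in I^c} \comp{A}_{i,\sigma(i)} \;=\; \sum_{\lambda \vdash |I^c|} \mathrm{Imm}_{\lambda}(\comp{A}[I^c])\, s_{\lambda}.
\]

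The second inner sum carries the extra factor $\sgn(\tau)$, and this is exactly the step where the transpose $\mu^T$ enters the statement of the proposition. The key (and only subtle) ingredient is the classical representation-theoretic identity $\sgn(\tau)\,\chi^{\mu}(\tau) = \chi^{\mu^T}(\tau)$, which reflects the fact that tensoring an irreducible $\mathfrak{S}_n$-representation with the sign representation conjugates the indexing partition. Using this together with Equation~(\ref{eqn:powertoschur}) gives
\[
\sum_{\tau \in \mathfrak{S}_I} p_{\cyc(\tau)}\,\sgn(\tau) \prod_{i \in I} A_{i,\tau(i)} \;=\; \sum_{\mu \vdash |I|} \mathrm{Imm}_{\mu^T}(A[I])\, s_{\mu}.
\]

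Substituting both expansions back into Theorem~\ref{thm:main}(c) and applying the Littlewood--Richardson expansion $s_{\lambda} s_{\mu} = \sum_{\nu} c^{\nu}_{\lambda \mu} s_{\nu}$ produces a triple sum over $I$, $(\lambda,\mu)$, and $\nu$; collecting coefficients of $s_{\nu}$ yields the claimed formula. I expect the only step requiring any care to be the sign twist identity $\sgn(\tau)\chi^{\mu}(\tau) = \chi^{\mu^T}(\tau)$, which explains why the adjacency matrix contributes $\mathrm{Imm}_{\mu^T}$ while its complement contributes $\mathrm{Imm}_{\lambda}$; everything else is a routine rearrangement of sums.
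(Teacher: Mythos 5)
Your proposal is correct and follows essentially the same route as the paper's proof: both start from Theorem~\ref{thm:main}(c), expand each inner power-sum via Equation~\eqref{eqn:powertoschur}, use the sign-twist identity $\sgn(\tau)\chi^{\mu}(\tau)=\chi^{\mu^T}(\tau)$ to produce $\mathrm{Imm}_{\mu^T}A[I]$, and finish with the Littlewood--Richardson expansion of $s_{\lambda}s_{\mu}$. No gaps.
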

\begin{proof}
If $I \subseteq [n]$, $\mu$ is a partition of $|I|$ and $\tau \in \Sym_I$ then $\chi^{\mu}(\tau) \sgn(\tau)=\chi^{\mu^T}(\tau)$. Using Theorem~\ref{thm:main}(c) and Equation~\eqref{eqn:powertoschur} we therefore get
\begin{align*}
\U_D &= \sum_{I \subseteq [n]} \left( \sum_{\sigma \in \Sym_{I^c}}
p_{\cyc(\sigma)} \prod_{i \in I^c} \overline{A}_{i, \sigma(i)} \right)
\left( \sum_{\tau \in \Sym_{I}}
p_{\cyc(\tau)} \sgn(\tau) \prod_{i \in I} A_{i, \tau(i)} \right)  \\
&=\sum_{I \subseteq [n]} \left( \sum_{\lambda \vdash |I^c|} \left( \sum_{\sigma \in \Sym_{I^c}} \chi^{\lambda}(\sigma)  \prod_{i \in I^c} \overline{A}_{i, \sigma(i)} \right)  s_{\lambda} \right) \cdot \left( \sum_{\mu \vdash |I| } \left( \sum_{\tau \in \Sym_{I}} \underbrace{\chi^{\mu}(\tau) \sgn(\tau)}_{\chi^{\mu^T}(\tau)}   \prod_{i \in I} A_{i, \tau(i)} \right)  s_{\mu} \right) \\
&=\sum_{I \subseteq [n]} \left( \sum_{\lambda \vdash |I^c|} \text{Imm}_{\lambda}\overline{A}[I^c]  \cdot s_{\lambda} \right) \left( \sum_{\mu \vdash |I|} \text{Imm}_{\mu^T}A[I] \cdot s_{\mu} \right).
\end{align*}
The result follows.
\end{proof}

The expression for $\U_D$ in Proposition~\ref{prop:UDexpansion} simplifies when $D$ is acyclic.

\begin{proposition}\label{prop:uppertriangularschur}
Let $D$ be an acyclic digraph on $[n]$. Then
\[
\U_D = \sum_{\lambda \vdash n} \mathrm{Imm}_{\lambda}(\overline{A}) \cdot s_{\lambda}.
\]
\end{proposition}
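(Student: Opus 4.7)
The cleanest route is to invoke Corollary~\ref{cor:uppertriangular} and then translate from the power sum basis into the Schur basis using the character formula~\eqref{eqn:powertoschur}. The plan is as follows.

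First, since $D$ is acyclic, Corollary~\ref{cor:uppertriangular} gives
\[
\U_D \;=\; \sum_{\sigma \in \mathfrak{S}_n} p_{\cyc(\sigma)} \prod_{i=1}^n \overline{A}_{i,\sigma(i)}.
\]
Next I would substitute the character expansion $p_{\cyc(\sigma)} = \sum_{\lambda \vdash n} \chi^{\lambda}(\sigma)\, s_{\lambda}$ from Equation~\eqref{eqn:powertoschur}, where $\chi^{\lambda}(\sigma)$ depends only on the cycle type of $\sigma$. Interchanging the order of summation yields
\[
\U_D \;=\; \sum_{\lambda \vdash n} \left( \sum_{\sigma \in \mathfrak{S}_n} \chi^{\lambda}(\sigma) \prod_{i=1}^n \overline{A}_{i,\sigma(i)} \right) s_{\lambda},
\]
and the inner sum is, by definition, $\mathrm{Imm}_{\lambda}(\overline{A})$.

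There is really no obstacle here beyond bookkeeping: the acyclicity hypothesis has already been absorbed into Corollary~\ref{cor:uppertriangular}, and the rest is a one-line swap of summations together with the identification of the resulting inner sum as an immanant. As a sanity check, one could alternatively derive the same identity from the more general Proposition~\ref{prop:UDexpansion}: acyclicity of $D$ (including no loops) forces $\prod_{i \in I} A_{i,\tau(i)} = 0$ for every $\tau \in \mathfrak{S}_I$ whenever $I \neq \emptyset$, so that $\mathrm{Imm}_{\mu^T}(A[I])$ vanishes unless $I = \emptyset$; the surviving $I = \emptyset$ term then reduces via $c^{\nu}_{\lambda,\emptyset} = \delta_{\lambda,\nu}$ to exactly $\sum_{\lambda \vdash n} \mathrm{Imm}_{\lambda}(\overline{A})\, s_{\lambda}$, confirming the formula.
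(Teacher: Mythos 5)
Your argument is correct and coincides exactly with the paper's proof: both invoke Corollary~\ref{cor:uppertriangular} for acyclic $D$, expand $p_{\cyc(\sigma)}$ via Equation~\eqref{eqn:powertoschur}, swap the order of summation, and recognize the inner sum as $\mathrm{Imm}_{\lambda}(\overline{A})$. The additional sanity check via Proposition~\ref{prop:UDexpansion} is a nice cross-validation but not needed.
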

\begin{proof}
From Corollary~\ref{cor:uppertriangular} and  Equation~(\ref{eqn:powertoschur}), we have
\[
\U_D = \sum_{\sigma \in \mathfrak{S}_n} p_{\cyc(\sigma)} \prod_{i=1}^n \overline{A}_{i,\sigma(i)} = \sum_{\sigma \in \mathfrak{S}_n} \sum_{\lambda \vdash n} \chi^{\lambda}(\sigma)  s_{\lambda} \prod_{i=1}^n \overline{A}_{i,\sigma(i)} =  \sum_{\lambda \vdash n} \left(\sum_{\sigma \in \mathfrak{S}_n} \chi^{\lambda}(\sigma) \prod_{i=1}^n \overline{A}_{i,\sigma(i)} \right) s_{\lambda}
\]
and the equation follows.
\end{proof}
Proposition~\ref{prop:uppertriangularschur} shows that for an acyclic digraph $D$ on $[n]$, $\U_D$ is Schur positive if and only if  $\mathrm{Imm}_{\lambda}(\comp{A})$ is nonnegative for every $\lambda \vdash n$. By a well-known result of Stembridge~\cite{StembridgeImmanants}, all of these immanants are known to be nonnegative in the special case when $\comp{A}$ is totally nonnegative (i.e. all minors of $\comp{A}$ are nonnegative). However,  using \cite[Theorem 2.1]{brualdi} one can see that the tandem conditions of $D$ being acyclic and $\comp{A}$ being totally nonnegative are very restrictive. For instance, these conditions are satisfied in Example~\ref{ex:star}.  The next example describes more general instances when this happens, and relates to a celebrated theorem on chromatic symmetric functions.

\begin{example} Let $P$ be a partial order on $[n]$ and $Y_P=\{(i,j) : i >_P j\}$. Then it is known that $\U_{Y_P} = X_{\text{inc}(P)}$, the chromatic symmetric function of the incomparability graph of $P$. One can use this to piece together the celebrated theorem of Gasharov \cite{SchurPositivity} that $X_{\text{inc}(P)}$ is Schur positive if $P$ is $(\bf{3}+\bf{1})$-free. Observe that $Y_P$ is acyclic, so by Proposition~\ref{prop:uppertriangularschur}, $X_{\text{inc}(P)} = \sum_{\lambda \vdash n} \text{Imm}_{\lambda}(\overline{A(Y_P)}) s_{\lambda}.$  Guay-Paquet \cite{guay2013modular} establishes that to prove Schur positivity for $(\bf{3}+\bf{1})$-free posets it is sufficient to prove it for \emph{unit interval orders}, that is posets that are $(\bf{3}+\bf{1})$- and $(\bf{2}+\bf{2})$-free. Now by  Skandera and Reed \cite[Proposition 5]{skandera2003total} together with Brualdi \cite[Theorem 2.2]{brualdi}, $\comp{A(Y_P)}^T$ is totally nonnegative and hence $A(Y_P)$ is. We deduce again by Stembridge \cite{StembridgeImmanants} that $X_{\mathrm{inc}(P)}=\U_{Y_P}$ is Schur positive.
\end{example}

\begin{remark} Comparing Propositions~\ref{Schurexpansion} and~\ref{prop:uppertriangularschur} we see that for an acyclic digraph $D$ we have
$$\mathrm{Imm}_{\lambda}(\comp{A}) = \mathfrak{L}_n \det 
[\overline{\gamma}_{\lambda_i-i+j}]
$$ 
where $\overline{\gamma}_k:=\gamma_k(\comp{D})$ is given by
$$
    \overline{\gamma}_k 
    = [z^k] W_{\comp{D}}(z)
    = [z^k] \det(I-zX\comp{A})^{-1}.
$$
Similar computations show that the immanants of an arbitrary matrix $M$ can be expressed in the Jacobi-Trudi form  $\mathrm{Imm}_{\lambda}(M) =\mathfrak{L}_n \det[\Delta_{\lambda_i-i+j}]$, where $\Delta_k = [z^k] \det(I-zXM)^{-1}$.  See~\cite[Theorem 2.1]{goulden1992immanants}.
\end{remark}

We now investigate combinatorial features of the coefficients in the expansion of $\U_D$ in the Schur basis. We start by observing that there is a direct combinatorial interpretation for the coefficient of $s_{\lambda}$ when $\lambda$ is a \emph{hook shape}, i.e. $\lambda=(i,1^{n-i})$ for some $1 \leq i \leq n$.

\begin{proposition}\label{prop:hookcoefficients}
Let $D$ be a digraph on $[n]$, $1 \leq i \leq n$, and $\lambda$ be the hook partition $\lambda=(i,1^{n-i})$. Then $[s_{\lambda}] \ \U_D$ is the number of permutations $\sigma \in \mathfrak{S}_n$ such that $\Des_D(\sigma)=\{i,i+1,\ldots,n-1\}$. In particular, $[s_{(1^n)}] \ \U_D$ is the number of Hamiltonian paths in $D$ and $[s_{(n)}] \ \U_D$ is the number of Hamiltonian paths in $\comp{D}$.
\end{proposition}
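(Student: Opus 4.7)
The plan is to expand both $\U_D$ and each Schur function $s_\mu$ in the fundamental quasisymmetric basis and match coefficients of a carefully chosen $F_I$. Recall Gessel's identity
$$s_\mu = \sum_{T \in \mathrm{SYT}(\mu)} F_{\mathrm{Des}(T)},$$
where $\mathrm{Des}(T) = \{j \in [n-1] : j+1 \text{ lies in a strictly lower row of } T \text{ than } j\}$. Writing $\U_D = \sum_{\mu \vdash n} c_\mu s_\mu$ and comparing with the defining expansion $\U_D = \sum_{\pi \in \Sym_n} F_{\mathrm{Des}_D(\pi)}$ from \eqref{eq:U_D}, linear independence of the $F_I$ yields
$$|\{\pi \in \Sym_n : \mathrm{Des}_D(\pi) = I\}| = \sum_{\mu \vdash n} c_\mu \cdot |\{T \in \mathrm{SYT}(\mu) : \mathrm{Des}(T) = I\}|$$
for every $I \subseteq [n-1]$.

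The crux is the combinatorial claim that, for $I = \{i, i+1, \ldots, n-1\}$, there is exactly one standard Young tableau (across all shapes $\mu \vdash n$) with descent set $I$, and it has shape $\lambda = (i, 1^{n-i})$ --- the filling whose first row is $1, 2, \ldots, i$ and whose first column below $(1,1)$ is $i+1, i+2, \ldots, n$. Granting this, the displayed identity collapses to $c_\lambda = |\{\pi : \mathrm{Des}_D(\pi) = I\}|$, proving the main statement. The two distinguished cases then follow instantly: $\mathrm{Des}_D(\pi) = [n-1]$ means every consecutive pair $(\pi_j, \pi_{j+1})$ is a directed edge of $D$, so $\pi$ is a Hamiltonian path of $D$; similarly $\mathrm{Des}_D(\pi) = \emptyset$ forces each such pair to lie in $\comp{D}$, giving a Hamiltonian path of $\comp{D}$.

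To verify the uniqueness claim, suppose $T \in \mathrm{SYT}(\mu)$ has $\mathrm{Des}(T) = \{i, \ldots, n-1\}$. Since none of $1, \ldots, i-1$ is a descent and $1$ lies at $(1,1)$, the entries $1, 2, \ldots, i$ are pinned into row $1$ at positions $(1,1), \ldots, (1,i)$. Because each of $i, i+1, \ldots, n-1$ is a descent, the entries $i, i+1, \ldots, n$ occupy strictly increasing rows $1 < r_1 < \cdots < r_{n-i}$, where $i+k$ sits in row $r_k$. A short induction on $k$ shows that $i+k$ must lie in column $1$: otherwise the entry immediately to its left would be smaller than $i+k$, hence either in row $1$ (forcing $r_k = 1$) or a previously placed $i+j$ already in column $1$ (forcing $r_k = r_j \leq r_{k-1}$), each a contradiction. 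Contiguity of column $1$ in a partition shape then forces $r_k = k+1$, so $\mu = (i, 1^{n-i})$ with the stated filling. This uniqueness step is the only nontrivial part of the argument; once it is in place the rest is routine bookkeeping.
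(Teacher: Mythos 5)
Your proof is correct and follows essentially the same route as the paper: both arguments compare the fundamental quasisymmetric expansion $\U_D=\sum_\alpha c_\alpha F_{S(\alpha)}$ against the Schur expansion and extract the coefficient of $F_{\{i,\dots,n-1\}}$. The only difference is that where the paper invokes the Egge--Loehr--Warrington identity $[s_\lambda]f=[F_{S(\lambda)}]f$ for hook shapes as a black box, you derive the needed special case directly from Gessel's expansion $s_\mu=\sum_{T\in\mathrm{SYT}(\mu)}F_{\Des(T)}$ together with a correct, self-contained verification that the hook tableau is the unique standard Young tableau (over all shapes of size $n$) with descent set $\{i,i+1,\dots,n-1\}$.
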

\begin{proof}
For a composition $\alpha=(\alpha_1,\ldots,\alpha_{k}) \models n$, define $S(\alpha) = \{\alpha_1,\alpha_1+\alpha_2,\ldots,\alpha_1 + \cdots +\alpha_{k-1}\} \subseteq [n-1]$. Then
\[
\U_D = \sum_{\alpha \models n} c_\alpha F_{S(\alpha)}
\]
where $c_{\alpha}$ is the number of permutations $\sigma \in \Sym_n$ with $\Des_D(\sigma) = S(\alpha)$.
Since we know $\U_D$ is symmetric, \cite[Theorem 15]{egge2010quasisymmetric} implies that $[s_{\lambda}]\,\U_D=c_{\lambda}$ for any hook partition  $\lambda \vdash n$. The result follows since $\lambda=(i,1^{n-i})$ has $S(\lambda)=\{i,i+1,\ldots,n-1\}$.
\end{proof}

The following Schur expansion of $\U_D$  when $D=\{(2,1),(3,2),\ldots,(n,n-1)\}$ is given in \cite[Exercise 120]{EC2} and was also observed by Chow for general directed Hamiltonian paths $D$ (see \cite[Section 6]{ChowPathCycle}):
\[
\U_D = \sum_{i=1}^n f_i \cdot s_{(i,1^{n-i})}.
\]
Here, $f_i$ is the number of permutations in $\mathfrak{S}_i$ with no reverse successions: that is permutations $\pi \in \mathfrak{S}_i$ with no index $j$ such that $\pi_{j+1}=\pi_j-1$. Note that the coefficients  agree with Proposition~\ref{prop:hookcoefficients}. Indeed, according to that proposition,  $[s_{(i,1^{n-i})}]\,\U_D$ is the number of permutations in $\Sym_n$ with reverse successions precisely at positions $\{i,i+1,\ldots,n-1\}$. Now there is a bijection between such permutations $\sigma$ and permutations in $\Sym_i$ with no reverse succession by sending $\sigma$ to the standardization of $\sigma_1 \sigma_2 \cdots \sigma_i$ (and one can readily see this is reversible).

\begin{remark}
Our proof of Proposition~\ref{prop:hookcoefficients} relied on~\cite[Theorem 15]{egge2010quasisymmetric}, which states (in part) that $\left[  s_{\lambda}\right]  f=\left[  F_{S\left(  \lambda\right)  }\right]f$ for any symmetric function $f \in \Lambda_\vec{z}$ and any hook partition $\lambda=(i,1^{n-i})$.   Grinberg~\cite{grinbergprivate} has pointed out the following alternative explanation for this identity.

 The Hall inner product on
$\Lambda$ $(=\Lambda_{\vec{z}})$ is a restriction of a bilinear form $\left\langle \cdot
,\cdot\right\rangle $ between the Hopf algebras $\operatorname*{NSym}$ and $\operatorname*{QSym}$ of noncommutative and quasisymmetric functions. The ribbon basis $\{R_{\alpha}\}$ of $\mathrm{NSym}$ and fundamental basis $\{F_{\alpha}\}$ of $\mathrm{QSym}$ are dual with respect to this pairing, and the canonical injection $\iota:\Lambda\rightarrow\operatorname*{QSym}$ and projection $\pi:\operatorname*{NSym}\rightarrow \Lambda$ are mutually adjoint.  
Since $s_{\lambda}=\pi(R_{\lambda})$ for ribbon-shaped $\lambda$, we have 
\begin{align*}
\left[  s_{\lambda}\right]  f   =\left\langle s_{\lambda},f\right\rangle
=\left\langle \pi\left(  R_{\lambda}\right)  ,f\right\rangle 
=\left\langle R_{\lambda},\iota\left(  f\right)  \right\rangle
 =\left[  F_{S\left(  \lambda\right)  }\right]  f.
 \end{align*}
See~\cite[Section 5.4]{grinberg2014hopf} for details concerning the above assertions.
\end{remark}

\subsection{Extensions to the Chow Path-Cycle Symmetric Function} Several of our results regarding $\U_D$ can be extended to Chow's symmetric function $\Xi_{D}(\vec{z},\vec{y})$. We start by noting the action of the fundamental involution, which we now denote $\omega_{\vec{z}}$ to emphasize that it is acting only on the $\vec{z}$ variables.  The following result appeared as \cite[Theorem 1]{ChowPathCycle} and was  reproved by Lass~\cite{Lass} using an approach equivalent to ours.

\begin{theorem}\label{thm:FullChow}\cite[Theorem 1]{ChowPathCycle} For a digraph $D$ on $[n]$ we have
\[[\omega_{\vec{z}} \ \Xi_{\comp{D}}(\vec{z},-\vec{y})]_{\vec{z} \to (\vec{z},\vec{y})} = \Xi_D(\vec{z},\vec{y}),\]
where the operation $\vec{z} \to (\vec{z},\vec{y})$ replaces $\vec{z}$ with the union of variables $\vec{z}$ and $\vec{y}$.
\end{theorem}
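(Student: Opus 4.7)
The plan is to first establish an analogue of Theorem~\ref{thm:main}(a) for the full path-cycle symmetric function, and then derive Theorem~\ref{thm:FullChow} as a direct consequence via the action of $\omega_{\vec{z}}$ on the generators of $\Lambda_{\vec{z}}$.

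The key auxiliary identity I would prove is
$$\Xi_D(\vec{z},\vec{y}) \;=\; \mathfrak{L}_n \bigl[\, \det E_{\vec{z}}(X\comp{A}) \cdot \det H_{(\vec{z},\vec{y})}(XA) \,\bigr],$$
where $H_{(\vec{z},\vec{y})}$ denotes the complete homogeneous generating series evaluated on the disjoint union of the variable sets. To see this, I would partition each path-cycle cover of $D$ into its cycle vertex set $I$ and path vertex set $I^c$, then factor $\mathfrak{L}_n$ using $\mathfrak{L}_n(fg) = \sum_{S \subseteq [n]} \mathfrak{L}_S f \cdot \mathfrak{L}_{S^c} g$ applied to $f = \prod_i W_D(z_i)$ and $g = \det H_{\vec{y}}(XA)$. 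An argument paralleling Proposition~\ref{prop:U_D_path} shows $\mathfrak{L}_{I^c} \prod_i W_D(z_i) = \sum_P \tilde{m}_{\path{P}}(\vec{z})$ summed over path covers $P$ of $D[I^c]$, and by Lemma~\ref{lem:walks} the full product $\prod_i W_D(z_i)$ equals $\det E_{\vec{z}}(X\comp{A}) \det H_{\vec{z}}(XA)$. Meanwhile, Corollary~\ref{cor:cyclecover} gives $\mathfrak{L}_I \det H_{\vec{y}}(XA) = \sum_C p_{\cycle{C}}(\vec{y})$ summed over cycle covers $C$ of $D[I]$. Finally, the scalar identity $H_{\vec{z}}(t) H_{\vec{y}}(t) = H_{(\vec{z},\vec{y})}(t)$ lifts to matrices (since the factors $(I-z_iXA)^{-1}$ and $(I-y_jXA)^{-1}$ commute) and merges the two complete-homogeneous factors.

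With this in hand, I would apply the auxiliary formula to $\comp{D}$ and substitute $\vec{y} \mapsto -\vec{y}$ to obtain
$$\Xi_{\comp{D}}(\vec{z},-\vec{y}) \;=\; \mathfrak{L}_n \bigl[\, \det E_{\vec{z}}(XA) \cdot \det H_{(\vec{z},-\vec{y})}(X\comp{A}) \,\bigr].$$
Via Jacobi's identity \eqref{eqn:Jacobi}, each determinant becomes an exponential of traces multiplied by power sums, exactly as in the proof of Lemma~\ref{lem:coefficientextraction}. Recalling $\omega p_k = (-1)^{k-1} p_k$, I would compute $\omega_{\vec{z}} \det E_{\vec{z}}(XA) = \det H_{\vec{z}}(XA)$, which under the subsequent substitution $\vec{z} \to (\vec{z},\vec{y})$ becomes $\det H_{(\vec{z},\vec{y})}(XA)$. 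For the second factor, writing $p_k((\vec{z},-\vec{y})) = p_k(\vec{z}) + (-1)^k p_k(\vec{y})$ and applying $\omega_{\vec{z}}$ produces the exponent $\sum_k \tr((X\comp{A})^k)\bigl[(-1)^{k-1} p_k(\vec{z}) + (-1)^k p_k(\vec{y})\bigr]/k$; performing the substitution $\vec{z} \to (\vec{z},\vec{y})$ (i.e.\ $p_k(\vec{z}) \mapsto p_k(\vec{z}) + p_k(\vec{y})$) causes the $p_k(\vec{y})$ coefficients to cancel via $(-1)^{k-1} + (-1)^k = 0$, collapsing the factor to $\det E_{\vec{z}}(X\comp{A})$. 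Reassembling, the result is precisely the auxiliary formula for $\Xi_D(\vec{z},\vec{y})$.

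The main obstacle is establishing the auxiliary identity, in particular correctly identifying $\mathfrak{L}_{I^c} \prod_i W_D(z_i)$ as the path-cover generating function weighted by $\tilde{m}_{\path{P}}(\vec{z})$. Once this determinantal formula is in place, the body of the proof is a clean power-sum computation whose pivotal cancellation is encoded by the identity $(-1)^{k-1}+(-1)^k = 0$; the compound operation $\omega_{\vec{z}}(\cdot)\bigr|_{\vec{z} \to (\vec{z},\vec{y})}$ is precisely engineered to swap the roles of $E$ and $H$ on the $\vec{z}$-variables while annihilating the auxiliary $\vec{y}$-variables inside the $\comp{A}$-factor and reintroducing them inside the $A$-factor.
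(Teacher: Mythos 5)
Your proposal is correct and follows essentially the same route as the paper: both first establish $\Xi_D(\vec{z},\vec{y})=\mathfrak{L}_n \det H_{(\vec{z},\vec{y})}(XA)\,E_{\vec{z}}(X\comp{A})$ by splitting each path-cycle cover over a vertex subset and invoking Corollary~\ref{cor:cyclecover} together with the walk generating function and Lemma~\ref{lem:walks}, and then apply the compound operation $\omega_{\vec{z}}(\cdot)|_{\vec{z}\to(\vec{z},\vec{y})}$. The only cosmetic difference is that you carry out the final cancellation at the level of power sums via Jacobi's identity, where the paper uses the equivalent generating-function identities $H_{-\vec{y}}(t)=1/E_{\vec{y}}(t)$ and $E_{(\vec{z},\vec{y})}=E_{\vec{z}}E_{\vec{y}}$ directly.
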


\begin{proof}
Without loss of generality, assume $D$ has vertex set $[n]$. Recall that \[\Xi_D(\vec{z},\vec{y})=\sum_S \tilde{m}_{\text{path}(S)}(\vec{z})p_{\text{cycle}(S)}(\vec{y})\] where the sum is over all path-cycle covers of $D$. Notice that for a given summand $S$, the vertices appearing in the parts of $\text{path}(S)$ are  complementary to those appearing in the parts of $\text{cycle}(S)$. From this we see that
\begin{align*}
\Xi_D(\vec{z},\vec{y})&=\sum_S \tilde{m}_{\text{path}(S)}(\vec{z})p_{\text{cycle}(S)}(\vec{y})\\
&=\sum_{I \subseteq [n]} \left( \sum_{S' \text{ a path cover of } D[I]} \tilde{m}_{\text{path}(S')}(\vec{z}) \right) \left( \sum_{S'' \text{ a cycle cover of } D[I^c]} p_{\text{cycle}(S'')}(\vec{y}) \right) 
\end{align*}
where $D[I]$ and $D[I^c]$ denote the subgraphs of $D$ induced by $I$ and $I^c$, respectively.

From Corollary~\ref{cor:cyclecover}, for any $I \subseteq [n]$ we have
\[
\sum_{S'' \text{ a cycle cover of } D[I^c]} p_{\text{cycle}(S'')}(\vec{y}) =  \mathfrak{L}_{I^c} \det H_{\vec{y}}(XA).
\]
On the other hand, we have
\[
\sum_{S' \text{ a path cover of } D[I]} \tilde{m}_{\text{path}(S')}(\vec{z}) = \mathfrak{L}_I W_{D[I]}(z_1)W_{D[I]}(z_2) \cdots
\]
As noted in Section~\ref{sec:walks}, $W_{D[I]}(z)$ is obtained from $W_D(z)$  by setting $x_j=0$ for $j \not \in I$.  Thus we obtain 
\begin{align}
\sum_{S' \text{ a path cover of } D[I]} \tilde{m}_{\text{path}(S')}(\vec{z}) &=
 \mathfrak{L}_I \Bigg(W_D(z_1)W_{D}(z_2) \cdots \Big|_{x_j=0,\, j \not \in I}\Bigg) & \notag\\
 &= \mathfrak{L}_I W_D(z_1)W_{D}(z_2) \cdots  &\notag\\
 &= \mathfrak{L}_I \det H_{\bf{z}}(XA) \det E_{\bf{z}}(X\comp{A}), \label{eq:pathcover}
\end{align}
where the last line follows from Lemma 1 just as in the proof of Theorem 10(a). 
 Altogether this gives
\begin{align*}
\Xi_D(\vec{z},\vec{y})
&=\sum_{I \subseteq [n]} \mathfrak{L}_I \det H_{\vec{z}}(XA) \det E_{\vec{z}}(X\comp{A}) \cdot \mathfrak{L}_{I^c} \det H_{\vec{y}}(XA)  \\
&= \mathfrak{L}_n \det H_{\vec{z}}(XA) E_{\vec{z}}(X\comp{A})H_{\vec{y}}(XA) \\
&= \mathfrak{L}_n \det H_{(\vec{z},\vec{y})}(XA) E_{\vec{z}}(X\comp{A}),
\end{align*}
from which we obtain
\begin{align*}
[\omega_{\vec{z}} \ \Xi_{\comp{D}}(\vec{z},-\vec{y})]_{\vec{z} \to (\vec{z},\vec{y})}
&= [\omega_{\vec{z}}\ \mathfrak{L}_n \det \left(H_{\vec{z}}(X\comp{A}) H_{-\vec{y}}(X\comp{A}) E_{\vec{z}}(XA)\right)]_{\vec{z} \to (\vec{z},\vec{y})} \\
&= \mathfrak{L}_n \det E_{(\vec{z},\vec{y})}(X\comp{A}) H_{-\vec{y}}(X\comp{A}) H_{(\vec{z},\vec{y})}(XA) \\
&=\mathfrak{L}_n \det H_{(\vec{z},\vec{y})}(XA) E_{\vec{z}}(X\comp{A}) = \Xi_D(\vec{z},\vec{y}).
\end{align*}
\end{proof}
In \cite[Section 6]{ChowPathCycle}, Chow also defines
\[
\hat{\Xi}_D(\vec{z},\vec{y}) = \sum_S (-2)^{\ell(\cycle{S})} \tilde{m}_{\path{S}}(\vec{z},\vec{y})\, p_{\cycle{S}}(\vec{y})
\]
where $\ell(\cycle{S})$ is the number of cycles in $S$ and the sum is over all path-cycle covers $S$ of $D$. 
The series $\hat{\Xi}$  was introduced because of the following involutive property:
\begin{theorem}\cite[Section 6]{ChowPathCycle} For a digraph $D$ on $[n]$ we have,
\[\omega_{\vec{z}} \ \hat{\Xi}_D(\vec{z},-\vec{y})=\hat{\Xi}_{\comp{D}}(\vec{z},\vec{y}).\]
\end{theorem}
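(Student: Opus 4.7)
The plan is to mirror the proof of Theorem~\ref{thm:FullChow} step-by-step, modifying only the cycle-cover contribution so that it absorbs the extra weight $(-2)^{\ell(\cycle{S})}$. I would first decompose each path-cycle cover $S$ of $D$ according to the vertex set $I$ covered by its paths (so $I^c$ is covered by the cycles), giving
\[
\hat\Xi_D(\vec{z},\vec{y}) = \sum_{I\subseteq[n]} \Bigl(\sum_{S'} \tilde m_{\path{S'}}(\vec{z},\vec{y})\Bigr)\Bigl(\sum_{S''}(-2)^{\ell(\cycle{S''})} p_{\cycle{S''}}(\vec{y})\Bigr),
\]
where $S'$ ranges over path covers of $D[I]$ and $S''$ over cycle covers of $D[I^c]$.

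The path-factor, now with alphabet $(\vec{z},\vec{y})$, reduces to $\mathfrak{L}_I \det H_{(\vec{z},\vec{y})}(XA)\,\det E_{(\vec{z},\vec{y})}(X\comp{A})$ exactly as in~(\ref{eq:pathcover}). For the cycle-factor I would start from Corollary~\ref{cor:cyclecover}, which gives $\sum_{S''} p_{\cycle{S''}}(\vec{y}) = \mathfrak{L}_{I^c}\det H_{\vec{y}}(XA)$, and introduce the weight $(-2)^{\ell(\cycle{S''})}$ by applying the $\Lambda_{\vec{y}}$-endomorphism $p_k \mapsto -2p_k$. Via the identity $\det H_{\vec{y}}(XA) = \exp\sum_k\tfrac{p_k}{k}\tr(XA)^k$ from Lemma~\ref{lem:coefficientextraction}, this endomorphism sends $\det H_{\vec{y}}(XA)$ to $\det H_{\vec{y}}(XA)^{-2}$. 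Summing over $I$, using multiplicativity of $\det$ together with $H_{(\vec{z},\vec{y})}(XA)=H_{\vec{z}}(XA)\,H_{\vec{y}}(XA)$, I would arrive at
\[
\hat\Xi_D(\vec{z},\vec{y}) = \mathfrak{L}_n \det\bigl(H_{\vec{z}}(XA)\,H_{\vec{y}}(XA)^{-1}\,E_{(\vec{z},\vec{y})}(X\comp{A})\bigr).
\]

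To close, I would substitute $\vec{y}\mapsto -\vec{y}$ using the elementary identities $H_{-\vec{y}}(t)^{-1}=E_{\vec{y}}(t)$ and $E_{-\vec{y}}(t)=H_{\vec{y}}(t)^{-1}$, and then apply $\omega_{\vec{z}}$, which swaps $H_{\vec{z}}\leftrightarrow E_{\vec{z}}$ while leaving the $\vec{y}$-factors untouched. The resulting expression is $\mathfrak{L}_n \det\bigl(E_{(\vec{z},\vec{y})}(XA)\,H_{\vec{z}}(X\comp{A})\,H_{\vec{y}}(X\comp{A})^{-1}\bigr)$, which is exactly the boxed formula above applied with $D$ replaced by $\comp{D}$ (swapping $A\leftrightarrow\comp{A}$), i.e.\ $\hat\Xi_{\comp{D}}(\vec{z},\vec{y})$.

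The main obstacle will be the very first algebraic step: correctly translating the combinatorial weight $(-2)^{\ell(\cycle{S})}$ into an operation on $\det H_{\vec{y}}(XA)$. Once it is recognised as the plethystic substitution $p_k\mapsto -2p_k$ — equivalently, raising $H_{\vec{y}}$ to the $-2$ power — everything else is routine bookkeeping with the identities linking $H_{\pm\vec{u}}$ and $E_{\pm\vec{u}}$, and with the multiplicativity of $\det$ inside $\mathfrak{L}_n$.
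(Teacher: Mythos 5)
Your proposal is correct and follows essentially the same route as the paper: the same decomposition over the path-vertex set $I$, the same use of Corollary~\ref{cor:cyclecover} with the specialization $p_k\mapsto -2p_k$ (which the paper phrases as the alphabet substitution $\vec{z}\to(-\vec{y},-\vec{y})$) to turn $\det H_{\vec{y}}(XA)$ into $\det H_{\vec{y}}(XA)^{-2}$, the same closed form $\mathfrak{L}_n\det\bigl(H_{\vec{z}}(XA)H_{\vec{y}}(XA)^{-1}E_{(\vec{z},\vec{y})}(X\comp{A})\bigr)$, and the same finishing manipulation via $H_{-\vec{y}}(t)^{-1}=E_{\vec{y}}(t)$ and $E_{-\vec{y}}(t)=H_{\vec{y}}(t)^{-1}$ together with $\omega_{\vec{z}}$.
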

\begin{proof}
Let $I \subseteq [n]$.
Substituting $\vec{z} \rightarrow (-\vec{y},-\vec{y})$ in
Corollary~\ref{cor:cyclecover} 
yields
\begin{align*}
\sum_{S \text{ a cycle cover of } D[I]} (-2)^{\ell(\cycle{S})} p_{\cycle{S}}(\vec{y}) 
=  \mathfrak{L}_I \det H_{\vec{y}}(XA)^{-2},
\end{align*}
whereas substituting $\vec{z}\rightarrow(\vec{z},\vec{y})$ in equation~\eqref{eq:pathcover} gives
\begin{equation}
\sum_{S' \text{ a path cover of } D[I]} \tilde{m}_{\text{path}(S')}(\vec{z},\vec{y}) 
= \mathfrak{L}_I \det H_{\bf{z}}(XA)H_{\bf{y}}(XA) \det E_{\bf{z}}(X\comp{A})
E_{\bf{y}}(X\comp{A}).
\end{equation}
There follows
\[
\hat{\Xi}_D(\vec{z},\vec{y})= \mathfrak{L}_n\ \dfrac{\det H_{\vec{z}}(XA) E_{\vec{z}}(X\comp{A})E_{\vec{y}}(X\comp{A})}{\det H_{\vec{y}}(XA)},
\]
and consequently
\begin{align*}
\omega_{\vec{z}} \ \hat{\Xi}_D(\vec{z},\vec{-y}) &=
 \mathfrak{L}_n\dfrac{\det E_{\vec{z}}(XA) H_{\vec{z}}(X\comp{A})E_{-\vec{y}}(X\comp{A})}{\det H_{-\vec{y}}(XA)} \\
&=
 \mathfrak{L}_n\dfrac{\det E_{\vec{z}}(XA) H_{\vec{z}}(X\comp{A})E_{\vec{y}}(XA)}{\det H_{\vec{y}}(X\comp{A})} \\
&=\hat{\Xi}_{\comp{D}}(\vec{z},\vec{y}),
\end{align*}
since $H_{-\vec{y}}(t)=1/E_{\vec{y}}(t)$ and  $E_{-\vec{y}}(t)=1/H_{\vec{y}}(t)$.
\end{proof}

Finally, we partially address a question of Grinberg and Stanley~\cite[Question 1.18]{GrinbergStanley} by generalizing Corollary~\ref{cor:U_Dpowersum} to expand the Chow path-cycle symmetric function in terms of power sums.

\begin{theorem}\label{thm:Chowpowersum} Let $D$ be a digraph on $[n]$. Then
\[
\Xi_D(\vec{z},\vec{y}) = \sum_{I \subseteq [n]} \sum_{\substack{\sigma \in \Sym_I(\comp{D}) \\ \tau \in \Sym_{I^c}(D)}} \sgn(\sigma) \cdot p_{\cyc(\sigma)}(\vec{z}) \cdot p_{\cyc(\tau)}(\vec{z},\vec{y}).
\]
\end{theorem}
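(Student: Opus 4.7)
The plan is to extract the identity directly from the determinantal formula obtained mid-way through the proof of Theorem~\ref{thm:FullChow}. That proof shows
\[
\Xi_D(\vec{z},\vec{y}) = \mathfrak{L}_n \det \bigl( H_{(\vec{z},\vec{y})}(XA) \cdot E_{\vec{z}}(X\comp{A}) \bigr),
\]
so I would take this as the starting point rather than re-deriving it from path-cycle covers.

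Next, I would split the determinant of the product as a product of determinants and use the standard convolution for $\mathfrak{L}_n$ of a product, namely $\mathfrak{L}_n(fg)=\sum_{I\subseteq [n]}\mathfrak{L}_I f\cdot \mathfrak{L}_{I^c} g$, to write
\[
\Xi_D(\vec{z},\vec{y}) = \sum_{I \subseteq [n]} \mathfrak{L}_{I^c}\bigl(\det H_{(\vec{z},\vec{y})}(XA)\bigr) \cdot \mathfrak{L}_{I}\bigl(\det E_{\vec{z}}(X\comp{A})\bigr).
\]
Now I would apply Lemma~\ref{lem:coefficientextraction}: part (a) with the alphabet $(\vec{z},\vec{y})$ and matrix $A$ converts the first factor to $\sum_{\tau \in \Sym_{I^c}} p_{\cyc(\tau)}(\vec{z},\vec{y})\prod_{i \in I^c} A_{i,\tau(i)}$, while part (b) with alphabet $\vec{z}$ and matrix $\comp{A}$ converts the second factor to $\sum_{\sigma \in \Sym_I} \sgn(\sigma)\, p_{\cyc(\sigma)}(\vec{z})\prod_{i \in I}\comp{A}_{i,\sigma(i)}$.

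To finish, I would observe, as in the proof of Corollary~\ref{cor:U_Dpowersum}, that $\prod_{i \in I^c} A_{i,\tau(i)}$ vanishes unless every cycle of $\tau$ is a $D$-cycle (so $\tau \in \Sym_{I^c}(D)$), and similarly $\prod_{i \in I}\comp{A}_{i,\sigma(i)}$ vanishes unless $\sigma \in \Sym_I(\comp{D})$. Dropping the now-redundant adjacency products yields exactly the claimed double sum.

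There is no real obstacle here; the theorem is essentially a bookkeeping consequence of the determinantal identity behind Theorem~\ref{thm:FullChow} combined with Lemma~\ref{lem:coefficientextraction}. The only point requiring minor care is correctly tracking which alphabet feeds into which factor: the $H$-factor absorbs the combined alphabet $(\vec{z},\vec{y})$ (from the merger $H_{\vec{z}}\cdot H_{\vec{y}} = H_{(\vec{z},\vec{y})}$ in the Theorem~\ref{thm:FullChow} derivation), whereas the $E$-factor only involves $\vec{z}$, which explains why $p_{\cyc(\tau)}$ is evaluated in $(\vec{z},\vec{y})$ but $p_{\cyc(\sigma)}$ only in $\vec{z}$.
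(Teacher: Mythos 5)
Your proposal is correct and follows essentially the same route as the paper's own proof: both start from the identity $\Xi_D(\vec{z},\vec{y}) = \mathfrak{L}_n \det H_{(\vec{z},\vec{y})}(XA)\, E_{\vec{z}}(X\comp{A})$ established in the proof of Theorem~\ref{thm:FullChow}, expand $\mathfrak{L}_n$ of the product over subsets $I\subseteq[n]$, apply Lemma~\ref{lem:coefficientextraction}, and discard the terms where the adjacency products vanish. The only caveat, which you share with the paper's own closing sentence, is that the vanishing of $\prod_{i}A_{i,\tau(i)}$ at fixed points is a condition on loops (trivial cycles), whereas $\Sym_{I^c}(D)$ is defined via nontrivial cycles only, so the final identification implicitly adopts the convention that fixed points are placed according to where the corresponding loop lives.
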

\begin{proof}
\goaway{For a digraph $D$ on $[n]$, $I \subseteq [n]$, and $\sigma \in \mathfrak{S}_I$ define $\text{Cycs}(\sigma)$ to be the set of disjoint cycles in the disjoint cycle decomposition of $\sigma$, and let
\[
\phi(\sigma):=\sum_{\substack{\gamma \in \text{Cycs}(\sigma) \\ \gamma \text{ is a } \comp{D}-\text{cycle}}} (\ell(\gamma)-1)
\]
where $\ell(\gamma)$ is the length of $\gamma$. We first show 
\begin{equation}\label{eqn:product}
\Xi_D(\vec{z},\vec{y})=\sum_{\substack{I \subseteq [n] \\ \sigma \in \mathfrak{S}_I(D,\overline{D}) \\ \tau \in \mathfrak{S}_{I^c}(D)}} (-1)^{\phi(\sigma)} p_{\cyc(\sigma)}(\vec{z}) p_{\cyc(\tau)}(\vec{y}).
\end{equation}
}
As seen in the proof of Theorem~\ref{thm:FullChow},
\begin{align*}
\Xi_D(\vec{z},\vec{y})
&= \mathfrak{L}_n \det H_{\vec{z}}(XA) E_{\vec{z}}(X\comp{A})H_{\vec{y}}(XA)\\
&= \mathfrak{L}_n \det H_{(\vec{z},\vec{y})}(XA) E_{\vec{z}}(X\overline{A}) \\
&=\sum_{I \subseteq [n]} \mathfrak{L}_I \det E_{\vec{z}}(X\overline{A}) \cdot \mathfrak{L}_{I^c} \det H_{(\vec{z},\vec{y})}(XA) 
\end{align*}
Now applying Lemma~\ref{lem:coefficientextraction} we get 
\[
\Xi_D(\vec{z},\vec{y}) = \sum_{I \subseteq [n]} \sum_{\substack{\sigma \in \Sym_I \\ \tau \in \Sym_{I^c}}} \sgn(\sigma) \cdot p_{\cyc(\sigma)}(\vec{z}) \cdot p_{\cyc(\tau)}(\vec{z},\vec{y})  \prod_{i \in I^c} A_{i,\tau(i)} \prod_{j \in I} \comp{A}_{j,\sigma(j)}.
\]
The result follows since the product over $I^c$ and the product over $I$ are non-zero in a given summand if and only if $\sigma \in \Sym_I(\overline{D})$ and $\tau \in \Sym_{I^c}(D)$.
\end{proof}

\subsection*{Acknowledgements}

The authors thank the anonymous referees for fruitful recommendations that greatly improved the manuscript. The authors are very grateful to Darij Grinberg for meticulously reviewing an earlier draft of this manuscript and providing insightful comments, as well as for the constructive discussions that greatly contributed to its development. The authors also thank Alejandro H. Morales for fruitful discussions on the literature. The second author is partially supported by research funds from York University, and NSERC Discovery Grant \#RGPIN-2025-06304.


\end{document}